\newtheorem{theorem}{Theorem}
\newtheorem{proposition}{Proposition}[section]
\newtheorem{remark}{Remark}[section]
\numberwithin{equation}{section}
\begin{document}

\title{Supersingular $j$-invariants and the Class Number of $\mathbb{Q}(\sqrt{-p})$}

\author{Guanju Xiao}
\address{Key Laboratory of Mathematics Mechanization, NCMIS, Academy of Mathematics and Systems Science, Chinese Academy of Sciences, Beijing 100190, People's Republic of China}
\address{School of Mathematical Sciences, University of Chinese Academy of Sciences, Beijing 100049, People's Republic of China}
\email{gjXiao@amss.ac.cn}

\author{Lixia Luo}
\address{Key Laboratory of Mathematics Mechanization, NCMIS, Academy of Mathematics and Systems Science, Chinese Academy of Sciences, Beijing 100190, People's Republic of China}
\address{School of Mathematical Sciences, University of Chinese Academy of Sciences, Beijing 100049, People's Republic of China}
\email{luolixia@amss.ac.cn}
\author{Yingpu Deng}
\address{Key Laboratory of Mathematics Mechanization, NCMIS, Academy of Mathematics and Systems Science, Chinese Academy of Sciences, Beijing 100190, People's Republic of China}
\address{School of Mathematical Sciences, University of Chinese Academy of Sciences, Beijing 100049, People's Republic of China}
\email{dengyp@amss.ac.cn}

\subjclass[2020]{Primary 14H52; Secondary 11R11, 11R29, 11Y16}



\keywords{Supersingular $j$-invariants, Class Polynomials, Class Number}

\begin{abstract}
For a prime $p>3$, let $D$ be the discriminant of an imaginary quadratic order with $|D|< \frac{4}{\sqrt{3}}\sqrt{p}$. We research the solutions of the class polynomial $H_D(X)$ mod $p$ in $\mathbb{F}_p$ if $D$ is not a quadratic residue in $\mathbb{F}_p$. We also discuss the common roots of different class polynomials in $\mathbb{F}_p$. As a result, we get a deterministic algorithm (Algorithm 3) for computing the class number of $\mathbb{Q}(\sqrt{-p})$. The time complexity of Algorithm 3 is $O(p^{3/4+\epsilon})$.
\end{abstract}

\maketitle

\section{Introduction}
For a prime $p>3$, we know that $\mathbb{Q}(\sqrt{-p})$ is an imaginary quadratic field with discriminant $-p$ (resp. $-4p$)  if $p \equiv 3 \pmod 4$ (resp. $p \equiv 1 \pmod 4$). We shall be concerned with the computational problem of calculating the class number of $\mathbb{Q}(\sqrt{-p})$.\par
Currently, the best available rigorous methods for computing class number of an imaginary quadratic field of discriminants $D$ is of complexity $(|D|^{1/2+\epsilon})$(see \cite{MR1933052} and \cite[Proposition 9.7.15]{MR2300780}).
Assuming the Generalized Riemann Hypothesis (GRH), Shanks's algorithm \cite{MR0316385,MR0371855} computes the class number $h(D)$ in $O(|D|^{1/5+\epsilon})$ operations, and can be used to compute the structure of the class group $C(D)$ in $O(|D|^{1/4+\epsilon})$ operations. Moreover, the values of $h(D)$ computed by Shanks's algorithm could not be guaranteed to be correct if the GRH is false. In 1989, Hafner and McCurley \cite{MR1002631} proposed a Las Vegas algorithm that compute the structure of the class group of $C(D)$ in an expected time of $L(D)^{\sqrt{2}+o(1)}$ bit operations under the assumption of GRH, where
$$L(D)=\exp(\sqrt{\log |D| \log \log |D|}).$$\par
In this paper, we will propose algorithms to compute the class number of $\mathbb{Q}(\sqrt{-p})$ using the supersingular $j$-invariants in $\mathbb{F}_p$.\par
For a prime $p>3$, we know that the number of supersingular $j$-invariants in $\mathbb{F}_p$ depends on the class number of $\mathbb{Q}(\sqrt{-p})$. If the discriminant $D$ of an imaginary quadratic order $O$ is not a quadratic residue in $\mathbb{F}_p$, then Deuring's theorem \cite[Theorem 13.12]{MR0409362} shows that the roots of $H_D(X)$ mod $p$ are supersingular $j$-invariants where $H_D(X)$ is the class polynomial. Moreover, Kaneko's theorem \cite[Theorem 1]{MR1040429} shows that every supersingular $j$-invariant contained in the prime field $\mathbb{F}_p$ is a roots of some $H_D(X)$ mod $p$ with $|D| < \frac{4}{\sqrt{3}}\sqrt{p}$. We can get the class number of $\mathbb{Q}(\sqrt{-p})$ by calculating the roots of $H_D(X)$ mod $p$ in $\mathbb{F}_p$ for all $|D|< \frac{4\sqrt{p}}{\sqrt{3}}$ with $\left ( \frac{D}{p} \right ) =-1$, however there exists no efficient algorithm to compute the  class polynomials.\par
To overcome this disadvantage, we consider the prime factorization of $pO_H$ in $H=\mathbb{Q}(j(O))$ where $O$ is an imaginary quadratic order with discriminant $D$. We will show that every prime ideal with norm $p$ in $H$ corresponds to a root of $H_D(X)$ mod $p$ in $\mathbb{F}_p$ if $|D| < \frac{4}{\sqrt{3}}\sqrt{p}$. The new problem is that it is difficult to construct the field $H$ without $j(O)$ in general, so we need a simpler subfield of $H$. Let $F$ be the genus field of $O$ and $L$ be the ring class field of $O$, it's easy to show that the maximal real subfield $E$ of $F$ is a subfield of $H$. What's more, we have the following diagram about the field extensions.
\[ \xymatrix{
&L \ar@{-}[d] \ar@{-}[rd] &\\
&F \ar@{-}[d] \ar@{-}[rd] &H \ar@{-}[d] \\
&K \ar@{-}[rd] &E \ar@{-}[d] \\
&& \mathbb{Q}  } \] \par
It is easier to compute the prime factorization of $p$ in $E$, and we will prove that every prime ideal with norm $p$ in $E$ corresponds to a root of $H_D(X)$ mod $p$ in $\mathbb{F}_p$ with $|D|< \frac{4\sqrt{p}}{\sqrt{3}}$. We also discuss the common roots of different class polynomials $H_{D_1}(X)$ and $H_{D_2}(X)$ in $\mathbb{F}_p$. We propose two algorithms (Algorithm 2 and Algorithm 3) in Section 4. Compared with this algorithm, the theoretical results are more interesting. \par
In this paper, $\text{N}(\mathfrak{p})$ is the absolute norm of $\mathfrak{p}$ if $\mathfrak{p}$ is an ideal in a number field, and $\text{Nrd}(\alpha)$ is the reduced norm of $\alpha$ if $\alpha$ is an element of a quaternion algebra. We always assume $p>3$. \par
The remainder of this paper is organized as follows. In Section 2, we review some preliminaries on supersingular $j$-invariants, class polynomials and genus theory. Theoretical results are in Section 3, including Theorem 5, 8 and 9. We propose two algorithms (Algorithm 2 and Algorithm 3) for computing the class number of $\mathbb{Q}(\sqrt{-p})$ and analyse the time complexity of them in Section 4. Finally, we make a conclusion in Section 5. \par

\section{Preliminaries}

\subsection{Supersingular $j$-invariants}
We will present some basic facts about the supersingular elliptic curves, and the reader can refer to \cite{MR2514094} for more details.
For an elliptic curve $E:Y^2=X^3+aX+b$ over a finite field with characteristic $p>3$, the $j$-invariant of $E$ is $j(E)=1728\cdot 4a^3/(4a^3+27b^2)$. Different elliptic curves with the same $j$-invariant are isomorphic over the algebraic closed field $\overline{\mathbb{F}}_p$. Moreover, the $j$-invariant of every supersingular elliptic curve over $\overline{\mathbb{F}}_p$ is proved to be in $\mathbb{F}_{p^2}$ and it is called a supersingular $j$-invariant. For a supersingular elliptic curve $E$ over $\mathbb{F}_{p^2}$, the endomorphism ring $\text{End}(E)$ is isomorphic to a maximal order of $B_{p,\infty}$, where $B_{p,\infty}$ is a quaternion algebra defined over $\mathbb{Q}$ and ramified at $p$ and $\infty$. Moreover, $j(E)\in \mathbb{F}_p$ if and only if $\text{End}(E)$ contains a root of $x^2+p=0$ (see \cite{MR3451433}). \par
Choose a prime integer $q$ such that $q \equiv 3 \pmod 8$ and $\left( \frac{p}{q} \right) =-1$, then $B_{p,\infty}$ can be written as $B_{p,\infty}=\mathbb{Q}+\mathbb{Q}\alpha+\mathbb{Q}\beta+\mathbb{Q}\alpha\beta$ where $\alpha^2=-p$, $\beta^2=-q$ and $\alpha\beta=-\beta\alpha$. Choosing an integer $r$ such that $r^2+p \equiv 0 \pmod  q$, put
$$\mathcal{O}(q,r)=\mathbb{Z} + \mathbb{Z}\frac{1+\beta}{2} + \mathbb{Z} \frac{\alpha(1+\beta)}{2} + \mathbb{Z}\frac{(r+\alpha)\beta}{q}.$$
When $p \equiv 3 \pmod  4$, we further choose an integer $r'$ such that $r'^2+p \equiv 0 \pmod {4q}$ and put $$\mathcal{O}'(q,r')=\mathbb{Z} + \mathbb{Z}\frac{1+\alpha}{2} + \mathbb{Z} \beta + \mathbb{Z}\frac{(r'+\alpha)\beta}{2q}.$$\par
Ibukiyama's results \cite{MR683249} show that both $\mathcal{O}(q,r)$ and $\mathcal{O}'(q,r')$ are maximal orders of $B_{p,\infty}$ and the endomorphism ring $\text{End}(E)$ is isomorphic to $\mathcal{O}(q,r)$ or $\mathcal{O'}(q,r')$ with suitable choice of $q$ if $E$ is a supersingular elliptic curve over $\mathbb{F}_p$.\par
For a prime $p > 3$, let $S_p$ be the set of all supersingular $j$-invariants in $\mathbb{F}_p$. Then (see \cite{MR3451433})
$$ \# S_{p}= \left\{
\begin{array}{lcl}
\frac{1}{2}h(-4p) & &{\text{if} \ p\equiv 1 \pmod{4} ,} \\
h(-p) & &{\text{if} \ p\equiv 7 \pmod{8} ,}\\
2h(-p) & &{\text{if} \ p\equiv 3 \pmod{8} ,}
\end{array}
\right.
$$
where $h(D)$ is the class number of the imaginary quadratic order with discriminant $D$.

\subsection{Class Polynomials}
We first recall some basic facts about the class polynomials, and the general references are \cite{MR3236783,MR0409362}.
Given an order $O$ in an imaginary quadratic field $K$, the class polynomial $H_D(X)$ is the monic minimal polynomial of $j(O)$ over $\mathbb{Q}$ where $D$ is the discriminant of $O$. Note that $H_D(X)$ has integer coefficients. Let $\{ \mathfrak{a}_i \}$ be a complete set of coset representatives of the $h(D)$ proper ideal classes of $O$. We have that
$$H_D(X)=\prod^{h(D)}_{i=1} (X-j(\mathfrak{a}_i))$$
is an irreducible polynomial in $\mathbb{Z}[X]$. The coefficients of $H_D(X)$ grow rapidly with the size of the discriminant $D$. Sutherland \cite{MR2728992} presented a space-efficient algorithm to compute the class polynomial $H_D(X)$ modulo a positive integer $p$, based on an explicit form of the Chinese Remainder Theorem. Under the Generalized Riemann Hypothesis, the algorithm uses $O(|D|^{1/2+\epsilon} \log p)$ space and has an expected running time of $O(|D|^{1+\epsilon})$. \par
As we known \cite[Lemma 9.3]{MR3236783}, $L=K(j(O))$ is an abelian extension of $K$ and the Galois group $\text{Gal}(L/K)$ is isomorphic to the class group $C(D)$ of $O$. Moreover, $L/\mathbb{Q}$ is a generalized dihedral extension and the Galois group $\text{Gal}(L/\mathbb{Q})$ can be written as a semidirect product
$$\text{Gal}(L/ \mathbb{Q})\simeq \text{Gal}(L/K) \rtimes (\mathbb{Z}/2\mathbb{Z})\simeq C(D)\rtimes (\mathbb{Z}/2\mathbb{Z}),$$
and the nontrivial element of $\mathbb{Z}/2\mathbb{Z}$ acts on $\text{Gal}(L/K)$ via conjugation by $\sigma$ which is a complex conjugation.\par
We also need Deuring's reduction theorem \cite[Theorem 13.12]{MR0409362}.
\begin{theorem}
Let $\tilde{E}$ be an elliptic curve over a number field with $\text{End} (\tilde{E}) \simeq O$, where
$O$ is an order of an imaginary quadratic field $K$. Let $\mathfrak{p}$ be a prime ideal of $\overline{\mathbb{Q}}$ over a prime
number $p$, at which $\tilde{E}$ has non-degenerate reduction $E$. $E$ is supersingular if and only if
$p$ does not split in $K$.
\end{theorem}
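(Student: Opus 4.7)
The strategy is to exploit the fact that good (non-degenerate) reduction induces an injection on endomorphism rings, and then to compare the embedded copy of $O$ inside $\text{End}(E)$ against the known structure of $\text{End}(E)$ in each of the two possible cases (ordinary or supersingular).

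First I would invoke the classical fact that, because $\tilde{E}$ has non-degenerate reduction at $\mathfrak{p}$, the reduction map $\text{End}(\tilde{E}) \to \text{End}(E)$ is an injective ring homomorphism; this is proved by considering the induced action on the $\ell$-adic Tate module for any prime $\ell \neq p$ (reduction is an isomorphism on $\ell$-power torsion, and an isogeny vanishing on enough torsion vanishes). Consequently $O$ embeds into $\text{End}(E)$, and hence $K = O \otimes_{\mathbb{Z}} \mathbb{Q}$ embeds into the $\mathbb{Q}$-algebra $\text{End}(E) \otimes \mathbb{Q}$.

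For the forward direction, assume $E$ is supersingular, so that $\text{End}(E) \otimes \mathbb{Q} \simeq B_{p,\infty}$ is ramified at $p$. Completing at $p$, the algebra $B_{p,\infty} \otimes \mathbb{Q}_p$ is a division algebra and therefore contains no nontrivial idempotents. If $p$ were to split in $K$, then $K \otimes \mathbb{Q}_p \simeq \mathbb{Q}_p \times \mathbb{Q}_p$ would embed into this division algebra, producing such an idempotent --- a contradiction. Hence $p$ does not split in $K$. For the converse I would argue contrapositively: suppose $E$ is ordinary. Then $\text{End}(E) \otimes \mathbb{Q}$ is an imaginary quadratic field $K'$, and the characteristic polynomial of the $q$-th power Frobenius (where $q$ is the cardinality of the residue field) is $X^2 - tX + q$ with $p \nmid t$ by the definition of ordinary. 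Hensel's lemma factors this polynomial over $\mathbb{Q}_p$, so $p$ splits in $K'$. Since $K \hookrightarrow K'$ and both are quadratic over $\mathbb{Q}$, we must have $K = K'$, and therefore $p$ splits in $K$.

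The main obstacle will be the injectivity of the reduction map on endomorphism rings: this is really the substantive input to the theorem and needs a careful argument via Tate modules (or, alternatively, via formal groups) rather than a naive moduli-theoretic comparison. Once that injectivity is in hand, the rest of the argument rests only on the well-known dichotomy between ordinary and supersingular reduction, the ramification behaviour of $B_{p,\infty}$ at $p$, and elementary facts about the splitting of a rational prime in an imaginary quadratic field.
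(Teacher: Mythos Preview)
Your argument is correct and is essentially the standard proof of Deuring's reduction theorem. Note, however, that the paper does not actually prove this statement: it is quoted as a known result (Deuring's reduction theorem, cited from Lang's \emph{Elliptic Functions}, Theorem~13.12) in the preliminaries section, with no accompanying proof. So there is nothing in the paper to compare your proposal against; your sketch simply supplies a proof where the paper chose to give only a reference.

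For the record, the two steps you identify --- injectivity of the reduction map $\text{End}(\tilde{E})\hookrightarrow\text{End}(E)$ via Tate modules, and then the dichotomy analysis using the local structure of $B_{p,\infty}$ in the supersingular case versus the Frobenius characteristic polynomial $X^2-tX+q$ with $p\nmid t$ in the ordinary case --- are exactly the ingredients of the classical proof. One very small remark: in the ordinary direction you might make explicit that $X^2-tX+q\equiv X(X-t)\pmod p$ has \emph{distinct} simple roots modulo $p$ (since $p\nmid t$), which is precisely the Hensel hypothesis you need.
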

Kaneko proved two theorems in \cite{MR1040429}.
\begin{theorem}\label{t2}
  Every supersingular $j$-invariant contained in the prime field $\mathbb{F}_p$ is a root of some $H_D(X)$ mod $p$ with $| D | \le \frac{4}{\sqrt{3}} \sqrt{p}$.
\end{theorem}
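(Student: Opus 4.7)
The goal is to construct, for each supersingular $j_0 \in \mathbb{F}_p$, an imaginary quadratic order $O$ of discriminant $D$ with $|D| \leq \frac{4}{\sqrt{3}}\sqrt{p}$ that embeds in $\text{End}(E)$ for some supersingular elliptic curve $E/\mathbb{F}_p$ with $j(E)=j_0$. Deuring's lifting theorem (the converse direction of Theorem~1) then supplies an elliptic curve $\widetilde E$ in characteristic zero whose endomorphism ring contains $O$ and which reduces modulo some prime above $p$ to $E$; hence $j_0$ is a root of $H_{\text{disc}(O)}(X) \bmod p$.

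Fix a supersingular $E/\mathbb{F}_p$ with $j(E)=j_0$. Since $E$ is supersingular over $\mathbb{F}_p$, its Frobenius $\pi\in\mathcal{O}:=\text{End}(E)$ satisfies $\pi^2=-p$, so $\text{Trd}(\pi)=0$ and $\text{Nrd}(\pi)=p$. For each $\alpha\in\mathcal{O}$ set $\widetilde\alpha:=2\alpha-\text{Trd}(\alpha)\in\mathcal{O}$; this element is traceless and satisfies $\widetilde\alpha^2=\text{Trd}(\alpha)^2-4\text{Nrd}(\alpha)=\text{disc}(\mathbb{Z}[\alpha])$, so $\text{Nrd}(\widetilde\alpha)=|\text{disc}(\mathbb{Z}[\alpha])|$. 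The set $\Lambda:=\{\widetilde\alpha:\alpha\in\mathcal{O}\}$ is a rank-$3$ lattice in the trace-zero subspace of $B_{p,\infty}$, equipped with the positive-definite quadratic form $\text{Nrd}$. Writing $\mathcal{O}$ in the Ibukiyama form $\mathcal{O}(q,r)$ or $\mathcal{O}'(q,r')$ from Section~2 and working through the $2$-adic cases $p \bmod 8$ separately, one computes that the Gram matrix of $\text{Nrd}$ on $\Lambda$ has determinant $4p^2$, and verifies that the primitive lattice vector $v\in\Lambda$ proportional to $\pi$ (of norm $p$ or $4p$) satisfies $\det G_{\Lambda'}\leq 4p$ for $\Lambda':=v^\perp\cap\Lambda$.

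Hermite's bound for binary positive-definite forms, with Hermite constant $\gamma_2 = 2/\sqrt{3}$, now produces $\omega\in\Lambda'\setminus\{0\}$ satisfying
$$\text{Nrd}(\omega)\leq\gamma_2\sqrt{\det G_{\Lambda'}}\leq\frac{2}{\sqrt{3}}\sqrt{4p}=\frac{4\sqrt{p}}{\sqrt{3}}.$$
Writing $\omega=\widetilde\alpha$ for some $\alpha\in\mathcal{O}\setminus\mathbb{Z}$, we get $\mathbb{Z}[\alpha]\hookrightarrow\text{End}(E)$ of discriminant $D=-\text{Nrd}(\omega)$ with $|D|\leq\frac{4}{\sqrt{3}}\sqrt{p}$. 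Deuring's lifting theorem lifts the pair $(E,\alpha)$ to $(\widetilde E,\widetilde\alpha)$ in characteristic zero with $\mathbb{Z}[\widetilde\alpha]\subseteq\text{End}(\widetilde E)$, and Theorem~1 yields $H_{D_0}(j_0)\equiv 0 \pmod p$ for the discriminant $D_0$ of $\text{End}(\widetilde E)$; since $\mathbb{Z}[\alpha]\subseteq\text{End}(\widetilde E)$ forces $D_0\mid D$, we conclude $|D_0|\leq|D|\leq\frac{4}{\sqrt{3}}\sqrt{p}$.

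The main obstacle is the ternary lattice computation in the second paragraph: establishing $\det G_\Lambda = 4p^2$ and controlling the index $[\Lambda:\mathbb{Z} v + \Lambda']$ so as to reach $\det G_{\Lambda'}\leq 4p$ requires a case analysis in the Ibukiyama presentation, with subcases depending on $p \bmod 8$ and on whether $(1+\pi)/2\in\mathcal{O}$. Once this geometric input is in hand, Hermite's dimension-$2$ bound is sharp and produces precisely the Kaneko constant $\frac{4}{\sqrt{3}}\sqrt{p}$.
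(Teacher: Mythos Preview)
The paper does not prove Theorem~\ref{t2} at all: it is quoted as a preliminary result from Kaneko \cite{MR1040429}, so there is no in-paper proof to compare against. Your sketch is in fact a faithful outline of Kaneko's original argument: pass to the rank-$3$ ``Gross lattice'' $\Lambda=(\mathbb{Z}+2\mathcal{O})\cap B_{p,\infty}^{0}$, cut down to the rank-$2$ sublattice $\Lambda'$ orthogonal to the Frobenius direction, and apply Hermite's constant $\gamma_2=2/\sqrt{3}$.

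The lattice computation you flag as the ``main obstacle'' does go through, and in fact one gets $\det G_{\Lambda'}=4p$ on the nose (not merely $\leq 4p$) in both Ibukiyama presentations. For instance, in $\mathcal{O}(q,r)$ the map $\gamma\mapsto 2\gamma-\mathrm{Trd}(\gamma)$ sends the standard basis to $\{\beta,\ \alpha+\alpha\beta,\ 2(r+\alpha)\beta/q\}$, and the orthogonal complement of $\alpha$ inside $\Lambda$ is spanned by $\beta$ and $2(r+\alpha)\beta/q$, whose Gram determinant is $q\cdot\frac{4(r^2+p)}{q}-(2r)^2=4p$. The $\mathcal{O}'(q,r')$ case is the same with $r'$ in place of $r$. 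So no genuine case split on $p\bmod 8$ is needed beyond choosing which Ibukiyama order to write down; once that is fixed, the determinant is $4p$ uniformly and Hermite gives exactly $\frac{4}{\sqrt{3}}\sqrt{p}$.

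One minor point: your appeal to Deuring lifting is slightly stronger than the paper's Theorem~1 (which is only the reduction direction). You need the converse---that $(E,\alpha)$ lifts to characteristic zero---which is the full Deuring lifting theorem; this is standard but worth citing explicitly rather than deriving from Theorem~1.
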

\begin{theorem}\label{t3}
If two different discriminants $D_1$ and $D_2$ satisfy $D_1 D_2 < 4p$ (in particular $|D_1|, |D_2| < 2\sqrt{p}$), then two polynomials $H_{D_1}(X)$ mod $p$ and $H_{D_2}(X)$ mod $p$ in $\mathbb{F}_p[X]$ have no roots in common. In other words, every prime factor $p$ of the resultant of $H_{D_1}(X)$ and $H_{D_2}(X)$
satisfies $p \le \frac{D_1 D_2}{4}$.\par
Furthermore, if $\mathbb{Q}(\sqrt{D_1})= \mathbb{Q}(\sqrt{D_2})$, the above inequality $D_1 D_2<4p$ (resp. $p \le \frac{D_1 D_2}{4}$)
can be replaced by $D_1 D_2<p^2$ (resp. $p\le \sqrt{D_1 D_2}$).
\end{theorem}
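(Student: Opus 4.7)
The plan is to transport a common root in $\overline{\mathbb{F}}_p$ to a pair of embeddings of imaginary quadratic orders into the same maximal quaternion order, and then to extract the bound via a discriminant computation.

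Suppose $j_0\in\overline{\mathbb{F}}_p$ is a common root of $H_{D_1}(X)\bmod p$ and $H_{D_2}(X)\bmod p$. Applying the theory of complex multiplication together with Deuring's reduction theorem (Theorem~1) in reverse, one obtains CM elliptic curves $E_1,E_2$ in characteristic zero with $\operatorname{End}(E_i)\simeq O_{D_i}$ whose reductions modulo a prime $\mathfrak{p}\mid p$ are isomorphic to a common curve $\overline{E}/\overline{\mathbb{F}}_p$. Reduction of endomorphisms then yields embeddings $\iota_i\colon O_{D_i}\hookrightarrow R:=\operatorname{End}(\overline{E})$.

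Consider the main case $\mathbb{Q}(\sqrt{D_1})\neq\mathbb{Q}(\sqrt{D_2})$. Since an imaginary quadratic order embeds only into quadratic orders of the same field, $R$ must be noncommutative, so $\overline{E}$ is supersingular and $R$ is a maximal order in $B_{p,\infty}$, whose discriminant for the reduced-trace form equals $p^2$. Choose $\mathbb{Z}$-generators $\omega_i$ of $O_{D_i}$ with minimal polynomial $X^2-d_iX+(d_i^2-D_i)/4$ where $d_i:=D_i\bmod 2$, and set $\alpha_i:=\iota_i(\omega_i)\in R$. The elements $\alpha_1,\alpha_2$ lie in distinct quadratic subfields of $B_{p,\infty}$ and hence do not commute, so $\{1,\alpha_1,\alpha_2,\alpha_1\alpha_2\}$ spans a rank-$4$ sublattice $L\subseteq R$. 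A direct Gram-matrix computation of $\operatorname{Trd}(xy)$ on this basis yields
\[
|\operatorname{disc}(L)|=\left(\frac{D_1D_2-r^2}{4}\right)^{\!2},\qquad r:=2\operatorname{Trd}(\alpha_1\alpha_2)-d_1d_2\in\mathbb{Z}.
\]
Combining with $|\operatorname{disc}(L)|=[R:L]^2|\operatorname{disc}(R)|=[R:L]^2 p^2$ forces $(D_1D_2-r^2)/4=[R:L]\cdot p\geq p$, hence $D_1D_2\geq 4p+r^2\geq 4p$, i.e.\ $p\leq D_1D_2/4$.

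In the remaining case $K:=\mathbb{Q}(\sqrt{D_1})=\mathbb{Q}(\sqrt{D_2})$, the images $\iota_i(K)$ in $B_{p,\infty}$ may coincide, making $\alpha_1,\alpha_2$ commute and invalidating the rank-$4$ argument. The improved bound $p\leq\sqrt{D_1D_2}$ is obtained by running a parallel but more delicate discriminant computation in which $\alpha_2$ is first conjugated by an element $\eta$ anticommuting with $\iota_1(K)$ (writing $B_{p,\infty}=K\oplus K\eta$ with $\eta^2\in\mathbb{Q}$), thereby restoring noncommutativity; the additional factor of $p$ arises from $\operatorname{Nrd}(\eta)$.

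The main technical obstacle is the Gram-matrix computation itself: one must verify that the reduced-trace discriminant of every maximal order in $B_{p,\infty}$ is exactly $p^2$, and must carefully track the factor of $16$ that arises when passing from the convenient ``trace-zero'' basis $\{1,\,2\omega_1-d_1,\,2\omega_2-d_2,\,(2\omega_1-d_1)(2\omega_2-d_2)\}$ (on which the Gram determinant simplifies to $(4D_1D_2-t^2)^2$ with $t:=\operatorname{Trd}((2\omega_1-d_1)(2\omega_2-d_2))$) back to the generator basis $\{1,\omega_1,\omega_2,\omega_1\omega_2\}$; this factor is precisely what sharpens the naive inequality $p\leq D_1D_2$ into the desired $p\leq D_1D_2/4$. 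An analogous but more delicate accounting controls the same-field refinement.
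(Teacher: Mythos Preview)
The paper does not contain a proof of this statement: Theorem~3 is quoted verbatim from Kaneko's paper \cite{MR1040429} in the preliminaries section, without proof. So there is no ``paper's own proof'' to compare against directly. That said, the paper later \emph{uses} Kaneko's argument (see the discussion after Theorem~4 and the proof of Theorem~9), and from those passages one can infer that the intended proof is exactly the quaternion-order discriminant argument you outline.

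Your treatment of the main case $\mathbb{Q}(\sqrt{D_1})\neq\mathbb{Q}(\sqrt{D_2})$ is correct and is precisely Kaneko's approach: lift the common root to two CM curves via Deuring, reduce to obtain embeddings $O_{D_i}\hookrightarrow R$ into a common maximal order of $B_{p,\infty}$, observe that the images cannot commute, and compare the discriminant of the rank-$4$ sublattice $\mathbb{Z}\langle 1,\alpha_1,\alpha_2,\alpha_1\alpha_2\rangle$ with $\operatorname{disc}(R)=p^2$. Your Gram-matrix bookkeeping, including the factor of $16$ coming from the passage to the trace-zero basis and the identification $t=2r$, is accurate and yields $(D_1D_2-r^2)/4=[R:L]\,p\geq p$ as required. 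This is also the mechanism behind the paper's Remark~3.2 (``$p$ divides an integer of the form $(D_1D_2-x^2)/4$'').

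Your sketch of the same-field refinement is the weak point. When $\mathbb{Q}(\sqrt{D_1})=\mathbb{Q}(\sqrt{D_2})$ and the two embeddings land in the \emph{same} maximal commutative subfield $K\subset B_{p,\infty}$, conjugating $\alpha_2$ by an anticommuting $\eta$ does restore noncommutativity, but $\eta\alpha_2\eta^{-1}$ need not lie in $R$, so you cannot directly run the sublattice argument inside $R$; one has to control the denominator introduced, and ``the extra factor of $p$ comes from $\operatorname{Nrd}(\eta)$'' is not yet an argument. Kaneko's actual proof in this case is more careful about which integral elements are used. This part of your proposal would need to be fleshed out before it counts as a proof, but the overall strategy is on the right track.
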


\subsection{Genus Fields}
Let $K$ be an imaginary quadratic field. $O$ is an order of $K$ with discriminant $D$. We write $D=-4n$ if $D\equiv \ 0 \pmod  4$. The following proposition \cite[Proposition 3.11]{MR3236783} determines the number of elements of order $2$ in the class group $C(D)$.
\begin{proposition}\label{p1}
  Let $D\equiv 0,1 \pmod  4$ be negative, and let $t$ be the number of odd primes dividing $D$. Define the number $\mu$ as follows: if $D\equiv 1 \ \pmod 4$, then $\mu =t$, and if $D\equiv \ 0 \pmod 4$, then $D=-4n$, where $n>0$, and $\mu$ is determined by the following table:
  \begin{table}[H]
    \centering
  \begin{tabular}{c|c}
    \hline
    $n$ & $\mu$ \\
    \hline
    $n \equiv \ 3 \pmod  4$ & $t$ \\
    $n \equiv \ 1,\ 2 \pmod  4$ & $t+1$ \\
    $n \equiv \ 4 \pmod 8$ & $t+1$ \\
    $n \equiv \ 0 \pmod  8$ & $t+2$ \\
    \hline
  \end{tabular}
  \end{table}
  Then the class group $C(D)$ has exactly $2^{\mu-1}$ elements of order $\le 2$.
\end{proposition}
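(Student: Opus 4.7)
The plan is to prove this by genus theory in the style of Cox's book (the cited reference): I identify $C(D)[2]$ with the set of ambiguous form classes, parametrize ambiguous classes by coprime factorizations of $D$ into ``prime discriminants,'' and then carry out an arithmetic case analysis driven by $D \bmod 4$ and, when $4 \mid D$, by $n \bmod 8$.

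First I would realize $C(D)$ as the form class group of primitive positive definite binary quadratic forms $f = ax^2 + bxy + cy^2$ of discriminant $D = b^2 - 4ac$. Under this identification the inverse $[f]^{-1}$ is represented by the opposite form $ax^2 - bxy + cy^2$, so $[f] \in C(D)[2]$ iff $f$ is equivalent to its opposite. Using Gauss's reduction theory I would show that each 2-torsion class contains a reduced \emph{ambiguous} form, meaning one satisfying $b = 0$, $a = b$, or $a = c$, and that the map from $C(D)[2]$ to reduced ambiguous forms is a bijection once the usual boundary identifications (when $a = c$, when $|b| = a$, etc.) are correctly handled.

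Next I would translate ambiguous reduced forms into factorizations of $D$. Substituting $b = 0$, $b = a$, or $a = c$ into $D = b^2 - 4ac$ converts each reduced ambiguous form into a coprime factorization $|D| = m_1 m_2$, where the coprimality and sign constraints come from primitivity of $f$. Counting these factorizations modulo the symmetry $\{m_1, m_2\} \sim \{m_2, m_1\}$ then gives $|C(D)[2]|$. With $t$ the number of odd primes dividing $D$, the count works out case-by-case as follows. When $D \equiv 1 \pmod 4$, each odd prime dividing $D$ contributes one independent binary choice (which of the two factors receives it), yielding $2^{t-1}$ factorizations and so $\mu = t$. When $D = -4n$ with $n \equiv 3 \pmod 4$, one has $-n \equiv 1 \pmod 4$, and the 2-adic part $-4$ is forced to accompany the odd part in any valid coprime factorization into discriminants, so again only the $t$ odd primes contribute and $\mu = t$. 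When $n \equiv 1, 2 \pmod 4$ or $n \equiv 4 \pmod 8$, exactly one of the prime discriminants at $2$ (namely $-4$, $8$, or $-8$) can be split off as an independent coprime factor, adding one binary choice and giving $\mu = t+1$. When $n \equiv 0 \pmod 8$, two prime discriminants at $2$ appear as independent factors, giving $\mu = t+2$. In every case the number of ambiguous classes comes out to $2^{\mu-1}$.

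The main obstacle is the 2-adic case analysis: one must determine exactly which of the prime discriminants $-4, 8, -8$ can show up as coprime factors of $D$, and how this depends on the residue of $n$ modulo $8$ (really, on $n$ modulo $32$ for some of the intermediate computations, although only the weaker residue information survives in the final count). The rest of the proof is essentially combinatorial bookkeeping, though one must also be careful with the boundary identifications among reduced ambiguous forms (particularly when $a = c$ or $b \in \{0, a\}$ coincide with other defining equalities) in order to avoid double counting or missing a class.
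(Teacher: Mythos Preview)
The paper does not give its own proof of this proposition; it is quoted verbatim from Cox \cite[Proposition~3.11]{MR3236783} as background in the preliminaries. Your outline is precisely the genus-theoretic argument that underlies Cox's proof: identify $C(D)[2]$ with classes of ambiguous reduced forms, convert each ambiguous reduced form into a coprime factorization of $|D|$, and then count such factorizations by a $2$-adic case analysis. So your proposal is correct and coincides with the cited source.

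One small remark on presentation: your language of ``prime discriminants'' and ``which of $-4,8,-8$ can be split off'' is literally the fundamental-discriminant picture, whereas Proposition~\ref{p1} is stated for arbitrary (possibly non-maximal) orders. In Cox the count is carried out directly by listing the assigned genus characters $\chi_{p_i}$, $\delta$, $\epsilon$ attached to $D$ and using the single product relation among them; this sidesteps any worry about conductors. Your ambiguous-form count gives the same answer, but when you write it up you should make sure the factorization argument is phrased for general $D$ (not just fundamental $D$), or else pass through the character count as Cox does.
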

Let $p_1, \ldots, p_t$ be the odd prime divisors of $D$, then the genus field of $O$ \cite[Theorem 2.2.23]{MR1313719} is $K_0(\sqrt{p_1^*}, \ldots, \sqrt{p_t^*})$ where $p_i^*=(-1)^{\frac{p_i-1}{2}}p_i$ and $K_0$ satisfies the following conditions
$$\left \{
\begin{array}{ll}
  K_0=\mathbb{Q} & \text{if} \ D \ \text{odd or} \ n \equiv -1\pmod 4,  \\
  K_0=\mathbb{Q}(i) & \text{if} \ n \equiv  1,4,5\pmod 8, \\
  K_0=\mathbb{Q}(\sqrt{-2}) & \text{if} \ n \equiv 2\pmod 8, \\
  K_0=\mathbb{Q}(\sqrt{2}) & \text{if} \ n \equiv -2\pmod 8, \\
  K_0=\mathbb{Q}(i,\sqrt{2}) & \text{if} \ n \equiv 0\pmod 8.
\end{array}
\right .$$

\section{The Number of Supersingular $j$-invariants over $\mathbb{F}_p$ }
In the first subsection, we will compute the class number of $\mathbb{Q}(\sqrt{-p})$ with class polynomials. The algorithm is simple, however there exists no efficient algorithm to compute class polynomials. In the second subsection, We will research the solutions of the class polynomial $H_D(X)$ mod $p$ in $\mathbb{F}_p$ if $D$ is not a quadratic residue in $\mathbb{F}_p$. We also discuss the common roots of different class polynomials in $\mathbb{F}_p$.
\subsection{Computing the Class Number with Class Polynomials}
Let $D\equiv 0,1 \pmod 4$ be the discriminant of an imaginary quadratic order. For a prime integer $p>3$, if the Legendre symbol $\left (\frac{D}{p} \right ) \neq 1$, then the roots of $H_D(X)$ mod $p$ in $\mathbb{F}_p$ are the supersingular $j$-invariants in $\mathbb{F}_p$ by Deuring's reducing theorem. To get the class number of $\mathbb{Q}(\sqrt{-p})$, we will compute the number of supersingular $j$-invariants over $\mathbb{F}_p$. Theorem \ref{t2} implies that we just need compute all the roots of $H_D(X)$ mod $p$ in $\mathbb{F}_p$ for $|D| < \frac{4}{\sqrt{3}}\sqrt{p}$, so we have Algorithm 1.\par
In fact, the degree of $\text{gcd}(H_D(X),X^p-X)$ is the number of roots of $H_D(X)$ in $\mathbb{F}_p$. Since two class polynomials $H_{D_1}(X)$ mod $p$ and $H_{D_2}(X)$ mod $p$ may have common roots in $\mathbb{F}_p$, we compute explicit roots of $H_D(X)$ in $\mathbb{F}_p$ in Algorithm 1. We will get a sufficient and necessary condition under which we can determine whether $H_{D_1}(X)$ mod $p$ and $H_{D_2}(X)$ mod $p$ have common roots in $\mathbb{F}_p$ or not in the next subsection.\par
The correctness of Algorithm 1 is obvious, but the time complexity is worse than other algorithms. Even though, we can get all the supersingular $j$-invariants over $\mathbb{F}_p$ by Algorithm 1.

\begin{algorithm}[H]
\caption{ Computing the class number of $\mathbb{Q}(\sqrt{-p})$ with $H_D(X)$.}
\label{alg:Framwork}
\begin{algorithmic}[1]
\Require
The prime $p$;
\Ensure
The class number of $\mathbb{Q}(\sqrt{-p})$;
\label{code:fram:extract}
\State Let $S_p=\varnothing$;
\label{code:fram:trainbase}
\State For every negative discriminant $D\equiv 0$ or $1 \pmod 4$ satisfying $|D| < \frac{4}{\sqrt{3}}\sqrt{p}$, do step 3;
\label{code:fram:add}
\State If Legendre symbol $\left (\frac{D}{p} \right )=-1$, then add all the roots of $H_D(X)$ in $\mathbb{F}_p$ to $S_p$;
\label{code:fram:classify}
\State $ h= \left\{
\begin{array}{ll}
\frac{1}{2}\# S_{p} &{\text{if} \ p\equiv 3 \pmod{8} ,} \\
\# S_{p} &{\text{if} \ p\equiv 7 \pmod{8} ,}\\
2\# S_{p} &{\text{if} \ p\equiv 1 \pmod{4} ;}
\end{array}
\right.$
\label{code:fram:select} \\
\Return $h$;
\end{algorithmic}
\end{algorithm}
\subsection{Computing the Number of Supersingular $j$-invariants over $\mathbb{F}_p$ without Class Polynomials}
In this subsection, we will calculate the roots of $H_D(X)$ mod $p$ in $\mathbb{F}_p$ by the prime factorization of $(p)$ in the field $E$ firstly. Secondly, we will consider the common roots of different class polynomials in $\mathbb{F}_p$. By this way, we can compute the number of supersingular $j$-invariants over $\mathbb{F}_p$ without class polynomials.
\subsubsection{The Roots of $H_D(X)$ mod $p$ in $\mathbb{F}_p$}
Let $K$ be an imaginary quadratic field. $O$ is an order of $K$ with discriminant $D$. Let $L$ be the ring class field of $O$ and $H=\mathbb{Q}(j(O))$. We have the following diagram about field extensions.
\[ \xymatrix{
&&L \ar@{-}[ld] \ar@{-}[rd] &\\
&K \ar@{-}[rd] &&H \ar@{-}[ld] \\
&&\mathbb{Q} & } \] \par
In general, we have the following theorem \cite[Theorem 3.3.5]{MR1313719} about the prime factorization of prime integer $p$ in number fields.
\begin{theorem}
  Let $H=\mathbb{Q}(\theta)$ be an extension field not necessarily normal but generated by an algebraic integer that satisfies the monic equation
  $$f(x)=x^m+a_1x^{m-1}+ \cdots +a_m=0.$$
  Then $\mathbb{Z}[\theta]$ is generally only a submodule of $O_H$, the integer ring of $H$, of index $i=[O_H:\mathbb{Z}[\theta]]$. Assume $p \nmid i$. Then if we factor
  $$f(x)\equiv f_1(x)^{e_1} \cdots f_g(x)^{e_g} \ \text{mod} \ p$$
  into polynomials $\{f_t(x)\}$ irreducible mod $p$ and distinct, this factorization simulates the prime factorization of $pO_H$,
  $$pO_H=\mathfrak{p}_1^{e_1} \cdots \mathfrak{p}_g^{e_g}.$$
  If the polynomial $f_t(x)$ is of degree $f_t$, then this is the degree of $\mathfrak{p}_t$, which can be written in the form $\mathfrak{p}_t=(p, f_t(\theta))$.
\end{theorem}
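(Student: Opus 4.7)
The plan is to reduce everything to the finite ring $O_H/pO_H$ and exploit the hypothesis $p\nmid i$ to replace $O_H$ by the more tractable $\mathbb{Z}[\theta]$. The cokernel $O_H/\mathbb{Z}[\theta]$ is a finite abelian group of order $i$, hence annihilated by $i$; since $\gcd(i,p)=1$, choosing $i'$ with $ii'\equiv 1\pmod p$ lets one write any $\alpha\in O_H$ as $\alpha\equiv i'(i\alpha)\pmod{pO_H}$ with $i\alpha\in\mathbb{Z}[\theta]$, and a parallel computation shows that any $\beta\in\mathbb{Z}[\theta]\cap pO_H$ lies in $p\mathbb{Z}[\theta]$. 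This yields the key isomorphism
$$\mathbb{Z}[\theta]/p\mathbb{Z}[\theta]\;\xrightarrow{\sim}\;O_H/pO_H,$$
and combining it with the tautological $\mathbb{Z}[\theta]\cong\mathbb{Z}[x]/(f(x))$ gives $O_H/pO_H\cong\mathbb{F}_p[x]/(\bar f(x))$, where $\bar f$ denotes the reduction of $f$ modulo $p$.

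Given the factorization $\bar f(x)=\prod_{t=1}^g f_t(x)^{e_t}$ into distinct irreducibles, the ideals $(f_t^{e_t})$ are pairwise coprime in $\mathbb{F}_p[x]$, so the Chinese Remainder Theorem delivers
$$O_H/pO_H\;\cong\;\prod_{t=1}^g R_t,\qquad R_t:=\mathbb{F}_p[x]/(f_t(x)^{e_t}).$$
The primes of $O_H$ lying above $p$ correspond to the maximal ideals of $O_H/pO_H$, and the only maximal ideal of the local ring $R_t$ is $m_t=(f_t)/(f_t^{e_t})$. Pulling $m_t\times\prod_{s\neq t}R_s$ back through the chain of isomorphisms above, and noting that $f_t(\theta)$ reduces to $f_t$ in $R_t$ and to a unit in each $R_s$ for $s\neq t$ (since $f_t$ is coprime to $f_s^{e_s}$), one reads off $\mathfrak p_t=(p,f_t(\theta))$, with residue field $R_t/m_t\cong\mathbb{F}_p[x]/(f_t(x))$ of degree $\deg f_t$ over $\mathbb{F}_p$.

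Finally, for the ramification exponents, the image of $\mathfrak p_t^{e_t}$ in the product decomposition is $m_t^{e_t}\times\prod_{s\neq t}R_s=0\times\prod_{s\neq t}R_s$, since $(f_t)^{e_t}=0$ in $R_t$. Intersecting over $t$ gives the zero ideal, so $\bigcap_t\mathfrak p_t^{e_t}=pO_H$; the $\mathfrak p_t$ being pairwise coprime maximal ideals, this intersection equals the product, and we conclude
$$pO_H\;=\;\prod_{t=1}^g\mathfrak p_t^{e_t}.$$
The single genuine obstacle in this argument is the initial isomorphism $\mathbb{Z}[\theta]/p\mathbb{Z}[\theta]\simeq O_H/pO_H$, which is precisely where the hypothesis $p\nmid i$ is indispensable; everything that follows is a formal consequence of the Chinese Remainder Theorem and the bijection between prime ideals of a commutative ring and maximal ideals of its finite quotients.
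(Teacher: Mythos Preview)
The paper does not prove this statement; it is quoted verbatim as a standard result from the literature (Cohn, \emph{Advanced Number Theory}, Theorem~3.3.5), so there is no ``paper's own proof'' to compare against. Your argument is the classical Dedekind--Kummer proof and is correct.

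One minor point: in the last paragraph you write that intersecting the images over $t$ gives zero and hence $\bigcap_t\mathfrak{p}_t^{e_t}=pO_H$. Strictly speaking the image computation only yields $\bigcap_t\mathfrak{p}_t^{e_t}\subseteq pO_H$, because you have not yet verified $pO_H\subseteq\mathfrak{p}_t^{e_t}$ (equivalently, that $\mathfrak{p}_t^{e_t}$ equals its own preimage $I_t=\mathfrak{p}_t^{e_t}+pO_H$). The cleanest patch is to compare norms: you have shown $\prod_t\mathfrak{p}_t^{e_t}\subseteq pO_H$, and since $N(\mathfrak{p}_t)=p^{\deg f_t}$ one gets $N\bigl(\prod_t\mathfrak{p}_t^{e_t}\bigr)=p^{\sum_t e_t\deg f_t}=p^m=N(pO_H)$, forcing equality. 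With this one-line addition the proof is complete.
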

In our case, $H=\mathbb{Q}(j(O))$. We assume $\left ( \frac{D}{p} \right ) =-1$ and $|D| < \frac{4}{\sqrt{3}}\sqrt{p}$, so $p$ is inert in $K$ and $pO_K$ splits completely in $L$ which implies that $p$ is not ramified in $H$. We claim that $p$ does not divide the discriminant of $H_D(X)$. If not, there is a multiple root of $H_D(X)$ mod $p$ which implies that $O_1=O$ and $O_2=O$ can be embedded in a maximal order $\mathcal{O}$ of $B_{p,\infty}$ with different images. We have $p\le |D|$ by the proof of Theorem \ref{t3} in \cite{MR1040429}, which contradicts $p>3$ and $p\ge \frac{3D^2}{16}$. We must have $p \nmid [O_H : \mathbb{Z}[j(O)]]$ and the above theorem holds in our case.\par
By Theorem 4, every root of $H_D(X)$ mod $p$ in $\mathbb{F}_p$ corresponds to an ideal $\mathfrak{P}$ in $O_H$ with norm $p$. In other words, we transform the problem of computing the number of roots of $H_D(X)$ mod $p$ in $\mathbb{F}_p$ into the problem of computing the number of integral ideals with norm $p$ in $H$.\par
How can we get the prime factorization of $p$ in $H$ without $H_D(X)$? If the genus field $F$ of $O$ is the same as the ring class field $L$ of $O$,  then $H=L \cap \mathbb{R}$ is a composition of some real quadratic fields. In this case, $H$ is Galois over $\mathbb{Q}$, and it is easy to determine whether $p$ splits completely in $H$ or not. This special case enlightens us to consider the prime factorization of $p$ in the maximal real subfield of $F$. \par
Let $p_1, \ldots, p_t$ be the odd prime divisors of $D$. Suppose that the first $m$ ($m \le t$) primes satisfying $p_i=p_i^*\equiv \ 1  \pmod 4$. For the other odd primes, we define $q_j=p_j^*$ for $j=1,\ldots,n=t-m$. We write $D=-4n$ if $D\equiv \ 0 \pmod  4$.\par
 We define a field $E$ which is a subfield of $F$ as follows.
$$E= \left \{
\begin{array}{ll}
  \mathbb{Q}(\sqrt{p_1},\ldots, \sqrt{p_m}, \sqrt{\frac{D}{q_1}}, \ldots, \sqrt{\frac{D}{q_n}}) & \text{if} \ D \ \text{odd or} \ n \equiv -1 \pmod 4,  \\
  \mathbb{Q}(\sqrt{p_1},\ldots, \sqrt{p_t}) & \text{if} \ n \equiv  1,4,5 \pmod 8, \\
  \mathbb{Q}(\sqrt{p_1},\ldots, \sqrt{p_m}, \sqrt{-2q_1},\ldots, \sqrt{-2q_n}) & \text{if} \ n \equiv 2 \pmod 8, \\
  \mathbb{Q}(\sqrt{2},\sqrt{p_1},\ldots, \sqrt{p_m}, \sqrt{\frac{D}{q_1}}, \ldots, \sqrt{\frac{D}{q_n}}) & \text{if} \ n \equiv -2 \pmod 8, \\
  \mathbb{Q}(\sqrt{2},\sqrt{p_1},\ldots, \sqrt{p_t}) & \text{if} \ n \equiv 0 \pmod 8.
\end{array}
\right .$$
\par
In fact, $E$ is the maximal real subfield of $F$. Moreover, $E$ is a Galois extension over $\mathbb{Q}$ and $\text{Gal}(E/\mathbb{Q})\simeq (\mathbb{Z}/2\mathbb{Z})^{\mu-1}$ where $\mu$ is defined as Proposition \ref{p1}. It is easy to show that $E=F\cap H$. Furthermore, we have the following diagram about the field extensions.
\[ \xymatrix{
&L \ar@{-}[d] \ar@{-}[rd] &\\
&F \ar@{-}[d] \ar@{-}[rd] &H \ar@{-}[d] \\
&K \ar@{-}[rd] &E \ar@{-}[d] \\
&& \mathbb{Q}  } \] \par
As we know, $L/E$ is a Galois extension with Galois group $G'= \text{Gal}(L/E)$. Let $\text{Gal}(L/H)=\langle \sigma \rangle$. If there exists an ideal $\mathfrak{P}$ in $H$ with norm $p$, then $\mathfrak{P}$ is inert in $L$ and the Frobenius map $\left (\frac{\mathfrak{P}}{L/H} \right )=\sigma$ has degree $2$. Moreover, the norm of the prime ideal $\mathfrak{p}=\mathfrak{P}\cap E$ is $p$.

\begin{theorem}
Let $p>3$ be a prime integer and $D$ the discriminant of an imaginary quadratic order. Suppose that $|D|< \frac{4}{\sqrt{3}}\sqrt{p}$ and $\left( \frac{D}{p} \right )=-1$. If $p$ splits completely in $E$, then there exist $2^{\mu-1}$ roots of $H_D(X)$ mod $p$ in $\mathbb{F}_p$. Otherwise, there exists no root of $H_D(X)$ mod $p$ in $\mathbb{F}_p$.
\end{theorem}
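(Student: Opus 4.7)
The plan is to count prime ideals of $O_H$ above $p$ of residue degree $1$; by the remarks preceding Theorem 4 (which ensure $p \nmid [O_H:\mathbb{Z}[j(O)]]$), this count equals the number of roots of $H_D(X) \bmod p$ in $\mathbb{F}_p$. I work with the Galois structure $\text{Gal}(L/\mathbb{Q}) = C(D) \rtimes \langle\sigma\rangle$ in which $\sigma$ is complex conjugation, acting on $C(D)$ by inversion. Two standing facts drive everything: $p$ is inert in $K$ because $(D/p) = -1$, and $pO_K$ splits completely in $L/K$ because $pO_K \cap O = pO$ is principal in $O$; together these force every prime $\mathfrak{Q}$ of $L$ above $p$ to be unramified with residue degree $2$ over $\mathbb{Q}$ and Frobenius of the form $c_0 \sigma$ for some $c_0 \in C(D)$. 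The easy direction follows at once: if a prime $\mathfrak{P}$ of $H$ above $p$ has norm $p$, then $\mathfrak{P} \cap E$ has residue degree $1$ over $p$, and since $E/\mathbb{Q}$ is Galois this forces $p$ to split completely in $E$.

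For the sufficient direction, assume $p$ splits completely in $E$. A prime $\mathfrak{P}$ of $H$ above $p$ has norm $p$ iff the unique prime $\mathfrak{Q}$ of $L$ above $\mathfrak{P}$ is inert in $L/H$ (since $[L:H]=2$), and by $\text{Gal}(L/H) = \langle\sigma\rangle$ this is equivalent to $\text{Frob}_{\mathfrak{Q}|p} = \sigma$ exactly; distinct such $\mathfrak{Q}$'s give distinct $\mathfrak{P}$'s. To verify that $\sigma$ lies in the conjugacy class of $c_0\sigma$, I project to the abelian extension $F/\mathbb{Q}$: since $F = EK$ with $E \cap K = \mathbb{Q}$ and $E \subset \mathbb{R}$, one has $\text{Gal}(F/\mathbb{Q}) \simeq \text{Gal}(E/\mathbb{Q}) \times \text{Gal}(K/\mathbb{Q})$, and the image of $c_0\sigma$ is $(\overline{c_0}, \sigma|_K)$ where $\overline{c_0}$ is the image of $c_0$ in $C(D)/C(D)^2 \simeq \text{Gal}(F/K) \simeq \text{Gal}(E/\mathbb{Q})$; so $p$ splits in $E$ precisely when $c_0 \in C(D)^2$. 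A direct computation in $C(D) \rtimes \langle\sigma\rangle$ (using $\sigma c \sigma^{-1} = c^{-1}$ and the abelianness of $C(D)$) shows the conjugacy class of $c_0\sigma$ is the single coset $c_0 C(D)^2 \cdot \sigma$, which contains $\sigma$ exactly when $c_0 \in C(D)^2$.

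I then count primes of $L$ with Frobenius equal to $\sigma$ by orbit--stabilizer. The centralizer $Z$ of $\sigma$ in $C(D) \rtimes \langle\sigma\rangle$ consists of those $c \in C(D)$ with $c^2 = 1$ together with $c\sigma$ for such $c$, so $|Z| = 2|C(D)[2]| = 2 \cdot 2^{\mu-1} = 2^\mu$ by Proposition \ref{p1}. Since primes of $L$ above $p$ correspond to cosets of the decomposition group (of order $2$), the number of primes whose Frobenius is exactly $\sigma$ equals $|Z|/2 = 2^{\mu-1}$, and each yields a distinct norm-$p$ prime of $H$, producing the claimed $2^{\mu-1}$ roots in $\mathbb{F}_p$.

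The main obstacle is that $L/\mathbb{Q}$ is non-abelian and $H/\mathbb{Q}$ is generally non-Galois, so splitting in $H$ is not governed directly by class field theory over $\mathbb{Q}$; the crucial observation that makes the counting tractable is that the generalized dihedral structure, with $\sigma$ acting by inversion on the abelian group $C(D)$, collapses the conjugacy class of each $c_0\sigma$ to a single coset of $C(D)^2$ and makes the centralizer of $\sigma$ into the $2$-torsion subgroup doubled, cleanly linking splitting of $p$ in $E$ to Frobenius calculations in $L$.
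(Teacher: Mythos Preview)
Your proof is correct and follows essentially the same approach as the paper's: both arguments use the generalized dihedral structure $\text{Gal}(L/\mathbb{Q})\simeq C(D)\rtimes\langle\sigma\rangle$, identify the Frobenius above $p$ as an element of the coset $C(D)\sigma$, show that it is conjugate to $\sigma$ precisely when its $C(D)$-part lies in $C(D)^2$ (equivalently, when $p$ splits completely in $E$), and count the norm-$p$ primes of $H$ via the centralizer $C_G(\sigma)$, obtaining $|C_G(\sigma)|/2=2^{\mu-1}$. The only cosmetic differences are that you make the identification $\text{Gal}(L/E)=C(D)^2\rtimes\langle\sigma\rangle$ explicit through the projection to $\text{Gal}(F/\mathbb{Q})\simeq\text{Gal}(E/\mathbb{Q})\times\text{Gal}(K/\mathbb{Q})$ and carry out the orbit--stabilizer count directly, whereas the paper states $\text{Gal}(L/E)=C(D)^2\rtimes\langle\sigma\rangle$ without further comment and cites Janusz for the formula $d=[C_G(\sigma):\langle\sigma\rangle]$.
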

\begin{proof}
As we know, the class polynomial $H_D(X)$ mod $p$ has roots in $\mathbb{F}_p$ if and only if there exist prime ideals in $H$ with norm $p$. Let $\mathfrak{P}$ be a prime ideal in $H$ with norm $p$, then the norm of $\mathfrak{p}=\mathfrak{P}\cap E$ is $p$ and $p$ splits completely in $E$. By the proof of Tchebotarev Density Theorem in Janusz's book \cite[Theorem 10.4 Chapter 5]{MR0366864}, the number of prime ideals with norm $p$ in $H$ is $d=[C_{G}(\sigma):\langle \sigma \rangle]$ with $G=\text{Gal}(L/\mathbb{Q})$. For any $\iota \in G$, we have $\iota \sigma \iota^{-1}= \sigma$ if and only if $\sigma \iota \sigma^{-1}=\iota$ if and only if $\iota^{-1}=\iota$ since $\sigma \iota \sigma^{-1}=\iota^{-1}$, so the order of $\iota$ is less than or equal to $2$. As we know, the number of elements with order less than or equal to $2$ in $G$ is $2^\mu$ where $\mu$ is defined in Proposition 2.1, so $d=2^{\mu-1}$. Moreover, we have that $p$ splits into $2^{\mu-1}$ prime ideals in $E$.\par
Suppose that $p$ splits completely in $E$. If $\mathcal{P}$ is a prime ideal in $L$ with norm $p^2$, then the Frobenius map $\left ( \frac{\mathcal{P}}{L/\mathbb{Q}}\right) |_E$ is trivial. Notice that $G'=\text{Gal}(L/E)=C(D)^2 \rtimes \langle \sigma \rangle$, so we have $\left( \frac{\mathcal{P}}{L/\mathbb{Q}} \right) \in G'$. As we know, the order of $\left( \frac{\mathcal{P}}{L/\mathbb{Q}}\right)$ is $2$ and $\left( \frac{\mathcal{P}}{L/\mathbb{Q}} \right)$ is not the square of an element in $C(D)$ since $\left ( \frac{\mathcal{P}}{L/\mathbb{Q}}\right) |_K$ is nontrivial, so we have that $\left( \frac{\mathcal{P}}{L/\mathbb{Q}}\right)=\sigma$ or $\left( \frac{\mathcal{P}}{L/\mathbb{Q}}\right)=\tau \sigma $ with $\tau \in C(D)^2$. If $\left( \frac{\mathcal{P}}{L/\mathbb{Q}}\right)=\sigma$, then the class polynomial $H_D(X)$ mod $p$ has roots in $\mathbb{F}_p$. If $\left( \frac{\mathcal{P}}{L/\mathbb{Q}}\right)=\delta^2 \sigma $ with $\delta \in C(D)$, then $\left( \frac{\delta^{-1} \mathcal{P}}{L/\mathbb{Q}}\right)=\delta^{-1} (\delta^2 \sigma ) \delta=\sigma$, which means that the class polynomial $H_D(X)$ mod $p$ has roots in $\mathbb{F}_p$. In conclusion, if $p$ splits completely in $E$, then the class polynomial $H_D(X)$ mod $p$ has roots in $\mathbb{F}_p$.
\end{proof}
\begin{remark}
Suppose $|D|< \frac{4}{\sqrt{3}}\sqrt{p}$ and $\left( \frac{D}{p} \right )=-1$. If the class number $h(D)$ is odd, then there exists exactly one root of $H_D(X)$ mod $p$ in $\mathbb{F}_p$.
\end{remark}

\subsubsection{The Common Roots of Different Class Polynomials}
For an imaginary quadratic order $O_i$ with discriminant $D_i$, we denote $F_i$ its genus field and $E_i=F_i \cap \mathbb{R}$. In this subsection, we assume that $p$ is inert in $\mathbb{Q}(\sqrt{D_i})$ and $p$ splits completely in $E_i$.\par
Let $D_1$ and $D_2$ be two different negative discriminants. If $\mathbb{Q}(\sqrt{D_1})=\mathbb{Q}(\sqrt{D_2})$, then every prime factor $p$ of the resultant of $H_{D_1}(X)$ and $H_{D_2}(X)$ satisfies $p^2\le D_1D_2$ by Theorem \ref{t3}. We assume $|D_i| < \frac{4}{\sqrt{3}}\sqrt{p}$, so $D_1D_2 < \frac{16}{3}p$. Let $p>5$ be a prime integer, then $H_{D_1}(X)$ mod $p$ and $H_{D_2}(X)$ mod $p$ in $\mathbb{F}_p$ have no roots in common. We assume $\mathbb{Q}(\sqrt{D_1})\neq \mathbb{Q}(\sqrt{D_2})$ in the following. \par
Let $D_1$, $D_2$ be two distinct negative discriminants and write
$$J(D_1,D_2)=\prod_{\substack{[\tau_1],[\tau_2] \\ \text{disc}(\tau_i)=D_i}}(j(\tau_1)-j(\tau_2)),$$
where $[\tau_i]$ runs over all elements of the upper half-plane with discriminant $D_i$ modulo $\text{SL}_2(\mathbb{Z})$. Let $w_i$ denote the number of roots of unity in the imaginary quadratic order of discriminant $D_i$. Let $f_i$ denote the conductor of $D_i$. Gross and Zagier \cite{MR772491} studied the prime factorizations of $J(D_1,D_2)$ where $D_1$ and $D_2$ are two fundamental discriminants which are relatively prime. Recently, Lauter and Viray \cite{MR3431591} studied the prime factorizations of $J(D_1,D_2)$ for arbitrary $D_1$ and $D_2$. We need two theorems in \cite{MR3431591}.

\begin{theorem}
Let $D_1$, $D_2$ be any two distinct discriminants. Then there exists a function $F$ that takes non-negative integers of the form $\frac{D_1 D_2 - x^2}{4}$ to (possibly fractional) prime powers. This function satisfies
$$J(D_1, D_2)^{\frac{8}{w_1 w_2}} = \pm \prod_{\substack{x^2 \le D_1D_2 \\ x^2 \equiv D_1 D_2 \pmod 4}} F \left (\frac{D_1 D_2 - x^2}{4} \right ),$$
Moreover, $F(m) = 1$ unless either (1) $m = 0$ and $D_2 = D_1 \ell^{2k}$ for some prime $\ell$ or (2) the Hilbert symbol $(D_1, -m)_{\ell} = -1$ at a unique finite prime $\ell$ and this prime divides $m$. In both of these cases, $F(m)$ is a (possibly fractional) power of $\ell$.
\end{theorem}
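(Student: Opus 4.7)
The plan is to reinterpret $\operatorname{ord}_\ell J(D_1,D_2)$ as a count of pairs of optimal embeddings of the orders $O_{D_1}, O_{D_2}$ into endomorphism rings of supersingular elliptic curves, following the Gross--Zagier philosophy but without assuming the discriminants are fundamental or coprime. Concretely, a prime $\ell$ can divide $j(\tau_1)-j(\tau_2)$ only when the elliptic curves $E_1, E_2$ with complex multiplication by $O_{D_1}, O_{D_2}$ become isomorphic modulo some prime over $\ell$; by Deuring's theorem this forces the common reduction to be supersingular, and then both $O_{D_1}$ and $O_{D_2}$ embed into the same maximal order $\mathcal{O}$ of the quaternion algebra $B_{\ell,\infty}$. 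Thus I would begin by converting the global valuation into a sum
\[
\operatorname{ord}_\ell J(D_1,D_2)^{8/(w_1 w_2)} \; = \; \sum_{x} N_\ell(x),
\]
where $N_\ell(x)$ counts (with appropriate weights coming from the automorphism groups) pairs of optimal embeddings $\iota_i\colon O_{D_i}\hookrightarrow\mathcal{O}$ whose images satisfy $\operatorname{Tr}(\iota_1(\sqrt{D_1})\iota_2(\sqrt{D_2})) = x$, and $x$ ranges over integers with $x^2 \le D_1 D_2$ and $x^2 \equiv D_1 D_2 \pmod 4$.

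Next I would package this count into the claimed product. For each admissible $x$ the number $m = (D_1 D_2 - x^2)/4$ is exactly the reduced norm of $\iota_1(\sqrt{D_1})\iota_2(\sqrt{D_2}) - x$ restricted to the suborder generated by the two embedded square roots, so a pair of embeddings at $\ell$ exists precisely when a certain ternary quadratic form represents $m$ locally at every place. Defining $F(m)$ as the product of the resulting local contributions (a prime power at $\ell$ with fractional exponent possibly appearing because of the weight $8/(w_1w_2)$), one immediately obtains
\[
J(D_1,D_2)^{8/(w_1 w_2)} = \pm \prod_{\substack{x^2 \le D_1 D_2 \\ x^2 \equiv D_1 D_2 \pmod 4}} F\!\left(\frac{D_1 D_2 - x^2}{4}\right).
\]
The sign $\pm$ absorbs the archimedean data; I would fix it by comparing with the known sign of Gross--Zagier in the coprime-fundamental case and extending by continuity of the formula in short families.

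To establish that $F(m)=1$ outside the exceptional cases, I would argue as follows. If $m \neq 0$ and the Hilbert symbol $(D_1,-m)_v = +1$ at every finite prime $v\mid m$, then the quaternion algebra produced by the embedding data is split, no obstruction to lifting the pair of embeddings exists, and the local count at every $\ell$ equals $1$, giving $F(m)=1$. Conversely, when there is a unique finite prime $\ell$ with $(D_1,-m)_\ell = -1$ (and necessarily $\ell\mid m$, by the product formula combined with parity of Hilbert symbols), exactly this $\ell$ is the one where $B_{\ell,\infty}$ is ramified and where the count becomes nontrivial, yielding a prime power of $\ell$. The degenerate case $m=0$ corresponds to $\iota_1(\sqrt{D_1})\iota_2(\sqrt{D_2}) = x$ being central, which forces $D_2 = D_1\ell^{2k}$ up to squares; this is a direct rationality check.

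The main obstacle, I expect, is the bookkeeping when $D_1$ and $D_2$ share conductors or common prime factors: the naive embedding count overcounts pairs that factor through a smaller order, and one must pass to \emph{optimal} embeddings via Möbius inversion over the lattice of overorders. Making this precise while preserving the rationality/integrality of the exponents (so that $F$ lands in genuine prime powers) is the most delicate part, and is what forces the normalization by $8/(w_1 w_2)$ and allows fractional exponents to appear at ramified primes. Once this inversion is carried out place by place, assembling the global product and verifying that the only surviving primes are those detected by the Hilbert symbol completes the proof.
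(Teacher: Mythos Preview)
The paper does not prove this theorem at all: it is quoted verbatim from Lauter and Viray \cite{MR3431591} as one of two results needed later, with no argument supplied. So there is no ``paper's own proof'' to compare against; the authors simply cite the result and move on.

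Your sketch is broadly the correct strategy and is in fact the Lauter--Viray approach (extending Gross--Zagier): interpret $\operatorname{ord}_\ell J(D_1,D_2)$ via Deuring lifting as a count of pairs of optimal embeddings $O_{D_i}\hookrightarrow\mathcal{O}$ into maximal orders of $B_{\ell,\infty}$, index the count by the trace parameter $x$, and identify $m=(D_1D_2-x^2)/4$ as the reduced norm controlling the local representability condition, with the Hilbert symbol $(D_1,-m)_\ell$ singling out the unique ramified prime. That said, several steps in your outline are placeholders rather than arguments. The phrase ``extending by continuity of the formula in short families'' is not a proof of the sign; the M\"obius inversion over the lattice of overorders when conductors share primes is precisely the hard part of Lauter--Viray and cannot be waved away in a sentence; and your explanation of why $F(m)=1$ when all Hilbert symbols are $+1$ (``no obstruction to lifting \ldots the local count at every $\ell$ equals $1$'') conflates the quaternion algebra being split with the embedding count being trivial, which is not automatic. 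If you want a self-contained proof you would need to reproduce the full local analysis of \cite{MR3431591}, including their treatment of non-maximal orders and the careful case split at $2$. For the purposes of this paper, however, a citation suffices.
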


\begin{theorem}
Let $m$ be a non-negative integer of the form $\frac{D_1D_2-x^2}{4}$ and $\ell$ a fixed prime that is coprime to $f_1$. If $m > 0$ and either $\ell > 2$ or $2$ does not ramify in both $\mathbb{Q}(\sqrt{D_1})$ and $\mathbb{Q}(\sqrt{D_2})$, then $v_{\ell}(F(m))$ can be expressed as a weighted sum of the number of certain invertible integral ideals in $O_{D_1}$ of norm $m/\ell^r$ for $r > 0$. Moreover, if $m$ is coprime to the conductor of $D_1$, then $v_{\ell}(F(m))$ is an integer and the weights are easily computed and constant; more precisely, we have
$$v_{\ell}(F(m))= \left \{
\begin{array}{ll}
  \frac{1}{e}\rho(m)\sum_{r \geq 1} \mathfrak{A}(m/\ell^r) & \text{if} \ \ell \nmid f_2 , \\
  \rho(m)\mathfrak{A}(m/\ell^{1+v(f_2)}) & \text{if} \ \ell \mid f_2 ,
\end{array}
\right. $$
where $e$ is the ramification degree of $\ell$ in $\mathbb{Q}(\sqrt{D_1})$ and
$$\rho(m)=\left \{
\begin{array}{ll}
  0 & \text{if} \ (D_1,-m)_p=-1 \ \text{for} \ p\mid D_1, p\nmid f_1 \ell, \\
  2^{\# \{p|(m,D_1):p\nmid f_2 \ \text{or} \ p=\ell \} } & \text{otherwise},
\end{array}
\right. $$
$$\mathfrak{A}(N)=\# \left \{
\begin{array}{cl}
   & N(\mathfrak{b})=N, \mathfrak{b} \ \text{invertible,} \\
  \mathfrak{b} \subseteq O_{D_1} & p\nmid \mathfrak{b} \ \text{for all} \ p\mid(N,f_2),p\nmid \ell D_1 \\
   & \mathfrak{p}^3 \nmid \mathfrak{b} \ \text{for all} \ \mathfrak{p}\mid p \mid (N,f_2,D_1), p\neq \ell
\end{array}
\right \}.$$
If $m=0$, then either $v_{\ell}(F(0))=0$ or $D_2=D_1\ell^{2k}$ and
$$v_{\ell}(F(0))=\frac{2}{w_1}\cdot \#\text{Pic}(O_{D_1}).$$
\end{theorem}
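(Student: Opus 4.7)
The plan is to follow the Gross--Zagier framework, extended as in Lauter--Viray to discriminants that need not be coprime or fundamental. At the top level, $v_{\ell}(F(m))$ should count pairs of optimal embeddings of $O_{D_1}$ and $O_{D_2}$ into endomorphism rings of elliptic curves that become isomorphic after reduction modulo some prime over $\ell$, with the connecting isogeny having degree $m$; the indexing $4m = D_1 D_2 - x^2$ arises because composing such an embedding with its dual produces an integer element of trace $x$ and norm compatible with the discriminant relation.

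First I would recast $v_{\ell}(F(m))$ via the Deuring correspondence. When $\ell$ is inert in $\mathbb{Q}(\sqrt{D_1})$, the reduction of a CM elliptic curve with $\mathrm{End} \simeq O_{D_1}$ is supersingular in characteristic $\ell$, and one embeds $O_{D_2}$ into a maximal order of $B_{\ell,\infty}$; when $\ell$ is split or ramified, the reduction is ordinary and one works directly with ideals of $O_{D_1}$. In either case, the valuation should collapse to a sum over $r \geq 1$ of counts of common embeddings of $O_{D_1}$ and $O_{D_2}$ into a fixed quaternion order sitting at the $r$-th level of the $\ell$-isogeny volcano, which is the origin of the summation $\sum_{r\geq 1}\mathfrak{A}(m/\ell^r)$ when $\ell\nmid f_2$ and of the single term $\mathfrak{A}(m/\ell^{1+v(f_2)})$ when $\ell\mid f_2$ (the conductor fixes the level).

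Next I would carry out the local analysis prime by prime. For a rational prime $p\mid (m, D_1)$ with $p \nmid f_1\ell$, the existence of a common embedding of $O_{D_1}$ and $O_{D_2}$ at $p$ is governed by the Hilbert symbol $(D_1,-m)_p$; its being $-1$ at some such prime makes the global embedding impossible, which is exactly the vanishing clause in $\rho(m)$. When embeddings do exist locally at every such $p$, the two optimal-embedding classes at each contribute the combinatorial factor $2^{\#\{\,\cdot\,\}}$. The Eichler-style mass/trace formula should then identify the remaining count as the number of invertible $O_{D_1}$-ideals of norm $m/\ell^r$, with the side conditions in the definition of $\mathfrak{A}(N)$ excluding ideals that fail to be invertible at primes dividing $(N,f_2)$ outside $\ell D_1$ and forbidding third powers of primes over $(N,f_2,D_1)\setminus\{\ell\}$ — these are precisely the local obstructions to lifting the common embedding to the full order $O_{D_1}$.

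The main obstacle will be the careful bookkeeping at the exceptional primes. The factor $1/e$ reflects ramification of $\ell$ in $\mathbb{Q}(\sqrt{D_1})$ and demands matching an orbit count on the volcano against an ideal count; the exclusion of the case $\ell=2$ ramified in both $\mathbb{Q}(\sqrt{D_1})$ and $\mathbb{Q}(\sqrt{D_2})$ is needed to bypass pathological $2$-adic behaviour of the local quaternion order, where the invertibility conditions defining $\mathfrak{A}$ no longer suffice. The degenerate case $m=0$ must be treated separately: it contributes only if $D_2=D_1\ell^{2k}$, in which case both CM orders live in the same imaginary quadratic field and the count degenerates to endomorphisms of a single CM curve, yielding $v_{\ell}(F(0)) = \tfrac{2}{w_1}\,\#\mathrm{Pic}(O_{D_1})$.
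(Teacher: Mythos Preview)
The paper does not prove this statement at all: it is quoted verbatim as one of ``two theorems in \cite{MR3431591}'' (Lauter--Viray), alongside the preceding theorem on $J(D_1,D_2)$, and is used as a black box in the subsequent computation of $v_p(F(p))$. So there is no ``paper's own proof'' to compare your proposal against.

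Your sketch is a reasonable high-level summary of the Lauter--Viray strategy --- Deuring correspondence, counting pairs of optimal embeddings into endomorphism rings, local analysis via Hilbert symbols, and the volcano/level bookkeeping giving the sum over $r$ --- so as an outline of how the cited result is proved it is broadly on target. But for the purposes of this paper it is unnecessary: the authors simply invoke the formula and then specialize to $m=\ell=p$ with $|D_i|<\tfrac{4}{\sqrt{3}}\sqrt{p}$, where $e=1$, $\mathfrak{A}(1)=1$, and $\rho(p)=2$, to conclude $v_p(F(p))=2$. If you are writing up this section, the correct move is to cite Lauter--Viray for the theorem and reserve your effort for the verification that $(D_1,-p)_q=1$ for all finite $q\neq p$, which is the only nontrivial step the paper actually carries out.
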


We assume $|D_1|, |D_2| < \frac{4}{\sqrt{3}}\sqrt{p}$ and $\mathbb{Q}(\sqrt{D_1})\neq \mathbb{Q}(\sqrt{D_2})$, so $p \mid \frac{D_1D_2-x^2}{4}$ for an integer $x$ if and only if $4p=D_1D_2-x^2$. We will compute $v_{\ell}(F(m))$ for $m= \ell =p$.\par
We assume that $p$ is inert in $\mathbb{Q}(\sqrt{D_1})$, so $e=1$. By definition, $\mathfrak{A}(p/p^{r})$ is $1$ (resp. $0$) if $r =1$ (resp. $r >1$), so $\sum_{r \ge 1} \mathfrak{A}(p/p^r)=1$. The cardinality of $\{q | (p,D_1):q\nmid f_2 \ \text{or} \ q=p \}$ is $1$. Moreover, we have that $(D_1,-p)_q=-1$ at a unique finite prime $q=p$ if $p$ splits completely in $E_1$. We illustrate this when $D_1$ is odd, and other cases can be proved similarly.\par
First, we have $(D_1,-p)_p=\left ( \frac{D_1}{p} \right )=-1$. If $q \nmid pD_1$ or $p \mid f_1$, then $(D_1,-p)_q=1$ obviously. For an odd prime $q | D_1$ and $q \nmid f_1p$, we can write $D_1=q^kD_1'$ where $q \nmid D_1'$. If $2\mid k$, then $(D_1, -p)_q=1$. If $2\nmid k$, we have that
$$(D_1, -p)_q=\left(\frac{-p}{q} \right)^k=\left(\frac{-p}{q} \right).$$
If $q \equiv 1 \pmod 4$, then $\left(\frac{-p}{q} \right)=\left(\frac{q}{p} \right)=1$. If $q \equiv 3 \pmod 4$, then $\left(\frac{-p}{q} \right)=-\left(\frac{-q}{p} \right)=1$ since $\left(\frac{D_1}{p} \right)=-1$ and $\left( \frac{D_1/-q}{p} \right)=1$. We have $\rho(p) =2$ and $v_p(F(p))=2$.
\begin{theorem}
  Let $D_1$ and $D_2$ be two distinct discriminants satisfying $\sqrt{3p} < |D_1| < |D_2| < \frac{4}{\sqrt{3}} \sqrt{p}$ and $\mathbb{Q}(\sqrt{D_1}) \neq \mathbb{Q}(\sqrt{D_2})$. Assume that the prime integer $p>5$ is inert in $\mathbb{Q}(\sqrt{D_1})$ and $\mathbb{Q}(\sqrt{D_2})$. If $p$ splits completely in $E_1$ and $E_2$, then the class polynomials $H_{D_1}(X)$ mod $p$ and $H_{D_2}(X)$ mod $p$ have one common root in $\mathbb{F}_p$ if and only if there exists an integer $x$ such that $D_1D_2-x^2=4p$.
\end{theorem}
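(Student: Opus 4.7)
My plan is to prove the biconditional in two implications, both anchored on the Gross--Zagier--Lauter--Viray product formula of Theorem~7 with input from Theorem~8.

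For the forward direction, suppose $H_{D_1}\bmod p$ and $H_{D_2}\bmod p$ share a common root in $\mathbb{F}_p$. Then $v_p(J(D_1,D_2))\ge 1$, so by Theorem~7 some factor $F\bigl((D_1D_2-y^2)/4\bigr)$ must be a positive power of $p$. Writing $m=(D_1D_2-y^2)/4\ge 0$, the hypothesis $|D_i|<\tfrac{4}{\sqrt{3}}\sqrt{p}$ yields $m<\tfrac{4p}{3}$, so the only multiples of $p$ in this range are $m=0$ and $m=p$. The case $m=0$ would force $D_1D_2$ to be a perfect square, which is incompatible with $\mathbb{Q}(\sqrt{D_1})\ne\mathbb{Q}(\sqrt{D_2})$; hence $m=p$, and $x:=y$ is the required integer.

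For the converse, assume an integer $x$ exists with $D_1D_2-x^2=4p$. The computation carried out immediately before the theorem statement (Theorem~8 together with the Hilbert-symbol analysis that uses $p$ splitting completely in $E_1$) yields $v_p(F(p))=2$. Under the range $|D_i|>\sqrt{3p}>4$ for $p\ge 7$, a short check rules out $x=0$ (otherwise $|D_1||D_2|=4p$ would require a factor $\le 4$) and forces $w_1=w_2=2$ (since $D_i\ne -3,-4$). Applying Theorem~7 with both $\pm x$ contributing equally to the summation yields
\[
v_p\!\left(J(D_1,D_2)^{8/(w_1w_2)}\right)\;=\;2\cdot v_p(F(p))\;=\;4,\qquad v_p(J(D_1,D_2))\;=\;2.
\]
Thus the resultant of $H_{D_1}$ and $H_{D_2}$ is divisible by $p$, so the two polynomials share a common root in $\overline{\mathbb{F}_p}$. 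To upgrade this to a unique root in $\mathbb{F}_p$, I would use Ibukiyama's description of the endomorphism ring of an $\mathbb{F}_p$-rational supersingular curve as $\mathcal{O}(q,r)$ or $\mathcal{O}'(q,r')$, which automatically contains Frobenius $\pi$ with $\pi^2=-p$. A common root in $\mathbb{F}_p$ is equivalent to a joint embedding $O_{D_1},O_{D_2}\hookrightarrow\mathcal{O}$ into such a maximal order, and the equation $D_1D_2-x^2=4p$ is the numerical compatibility condition arising from the reduced-trace Gram form on $\mathbb{Z}\omega_1+\mathbb{Z}\omega_2+\mathbb{Z}\pi$. Matching this joint-embedding count against the Gross--Zagier valuation $v_p(J)=2$ then delivers exactly one common root in $\mathbb{F}_p$.

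The main obstacle is the final step of the converse: converting $v_p(J)=2$ into the precise count of $\mathbb{F}_p$-rational common roots. A priori the valuation $2$ could be realized by two $\mathbb{F}_p$-rational roots contributing $1$ each, by one $\mathbb{F}_p$-rational root with a deeper ($\mathfrak{p}^2$-level) congruence, or by a single conjugate pair in $\mathbb{F}_{p^2}\setminus\mathbb{F}_p$. Distinguishing these alternatives---and aligning the count with joint embeddings into the Ibukiyama orders while tracking the normalization constants in the Gram-determinant calculation and the $\mathcal{O}^\times$-conjugacy classes---is the most delicate part of the argument, and requires careful use of the hypothesis that $p$ splits completely in $E_1$ and $E_2$.
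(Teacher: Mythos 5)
Your forward implication (common root in $\mathbb{F}_p$ $\Rightarrow$ existence of $x$ with $D_1D_2-x^2=4p$) is essentially the paper's argument, carried out in slightly more detail: $p\mid J(D_1,D_2)$ forces some factor $F\bigl((D_1D_2-x^2)/4\bigr)$ to be a $p$-power, the bound $|D_i|<\tfrac{4}{\sqrt3}\sqrt p$ pins $m=p$, and $m=0$ is excluded by $\mathbb{Q}(\sqrt{D_1})\neq\mathbb{Q}(\sqrt{D_2})$. The problem is the converse, which is where the theorem's actual content lies: the conclusion is not merely that the resultant is divisible by $p$, but that there is \emph{one} common root and that it lies in $\mathbb{F}_p$. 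Your proposal stops at $v_p(J(D_1,D_2))=2$ and then openly leaves unresolved whether this valuation comes from a single $\mathbb{F}_p$-rational root, from two $\mathbb{F}_p$-rational roots, or from a conjugate pair in $\mathbb{F}_{p^2}\setminus\mathbb{F}_p$; the proposed remedy via joint embeddings of $O_{D_1},O_{D_2}$ into the Ibukiyama orders is only a sketch, with the decisive counting step explicitly deferred. As written, the statement ``exactly one common root, rational over $\mathbb{F}_p$'' is therefore not proved, and this is a genuine gap rather than a routine omission.

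It is worth seeing how the paper handles this point: it asserts that the existence of $x$ gives $F(p)=p^2$ and $v_p(J(D_1,D_2))=1$ (i.e.\ it takes a single factor $F(p)$ in the product formula for $J^{8/(w_1w_2)}=J^2$), and then rules out a root in $\mathbb{F}_{p^2}\setminus\mathbb{F}_p$ because a conjugate pair would force $p^2\mid J(D_1,D_2)$; the same valuation bound also caps the number of common roots at one. Your value $v_p(J)=2$ comes from counting both $x$ and $-x$ in the Lauter--Viray product, which is arguably the faithful reading of Theorem 6 and is precisely what disables that conjugate-pair argument --- so you have put your finger on a real subtlety in this step. But identifying the subtlety is not the same as resolving it: to close the gap you would need either to justify the valuation-one count the paper uses, or to carry out in full the embedding argument you sketch (a common root $j$ gives embeddings of both $O_{D_1}$ and $O_{D_2}$ into $\mathrm{End}(E(j))$, and the $j\in\mathbb{F}_p$ hypothesis corresponds to the order containing a root of $x^2+p$, in the spirit of the reduced quadratic form analysis used in the paper's Theorem 9), together with an argument excluding two distinct common roots. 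Until one of these is done, the converse direction remains unproved in your proposal.
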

\begin{proof}
  If $p \ge 7$, then $|D_2|> |D_1| >4$ and $w_1=w_2=2$. If there exists an integer $x$ such that $D_1D_2-x^2=4p$, then $F(p)=p^2$ and $v_p(J(D_1,D_2))=1$. There exists one common root of $H_{D_1}(X)$ mod $p$ and $H_{D_2}(X)$ mod $p$ and the common root must be in $\mathbb{F}_p$. Otherwise, if $\alpha \in \mathbb{F}_{p^2} \setminus \mathbb{F}_p$ is the common root, then $\bar{\alpha}$ is also a common root and $p^2 | J(D_1,D_2)$. \par
  On the contrary, if $H_{D_1}(X)$ mod $p$ and $H_{D_2}(X)$ mod $p$ have one common root in $\mathbb{F}_p$, then $p | J(D_1,D_2)$ and there exists an integer $x$ such that $D_1D_2-x^2=4p$.
\end{proof}
\begin{remark}
Kaneko proved in \cite{MR1040429} that each prime factor $p$ of the resultant of $H_{D_1}(X)$ and $H_{D_2}(X)$ divides a positive integer of the form $(D_1D_2-x^2)/4$. If $\sqrt{3p} < |D_1| < |D_2| < \frac{4}{\sqrt{3}} \sqrt{p}$ and $4p=D_1D_2-x^2$, Theorem 8 implies that $H_{D_1}(X)$ mod $p$ and $H_{D_2}(X)$ mod $p$ have a common root in $\mathbb{F}_p$.
\end{remark}
For a discriminant $D_1$ satisfying $\sqrt{3p} < |D_1| < \frac{4}{\sqrt{3}} \sqrt{p}$, if there exists a discriminant $D_2$ with $\sqrt{3p} < |D_1| < |D_2| < \frac{4}{\sqrt{3}} \sqrt{p}$ and $D_1D_2 - x^2=4p$ where $x$ is a positive integer, then the form $f(X,Y)=-D_1X^2-2xXY-D_2Y^2$ is a positive definite quadratic form with discriminant $-16p$. We claim that $f(X,Y)$ is reduced.\par
We just need to prove $2x < -D_1$. If not, $2x \ge -D_1$ implies that $16p = 4D_1D_2-4x^2 < 8x \cdot \frac{4\sqrt{p}}{\sqrt{3}}-4x^2$. This inequality holds if and only if $\frac{2\sqrt{p}}{\sqrt{3}} < x < \frac{6\sqrt{p}}{\sqrt{3}}$, which contradicts $x^2=D_1D_2-4p<\frac{4}{3}p$. We do not require that $f(X,Y)$ is primitive.\par
In fact, we have the following theorem.
\begin{theorem}
Let $D_1$, $D_2$ and $D_3$ be three distinct discriminants satisfying $|D_1| < |D_2| < |D_3| < \frac{4}{\sqrt{3}} \sqrt{p}$. Assume that the prime integer $p>5$ is inert in $\mathbb{Q}(\sqrt{D_i})$. If $p$ splits completely in $E_i$, then the class polynomials $H_{D_1}(X)$, $H_{D_2}(X)$ and $H_{D_3}(X)$ have no common root in $\mathbb{F}_p$.
\end{theorem}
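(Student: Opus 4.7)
The plan is to suppose, for contradiction, that $j_0\in\mathbb{F}_p$ is a common root of $H_{D_1}(X),H_{D_2}(X),H_{D_3}(X)$ modulo $p$, and to derive a contradiction via a Gram-matrix computation inside $B_{p,\infty}$. First, the three fields $\mathbb{Q}(\sqrt{D_i})$ must be pairwise distinct: if two coincided then the second part of Theorem~\ref{t3} would force $p\le\sqrt{D_iD_j}<\frac{4}{\sqrt{3}}\sqrt{p}$, i.e.\ $p<16/3$, contradicting $p>5$. The first part of Theorem~\ref{t3} then gives $|D_i||D_j|\ge 4p$ for each pair, so $|D_i|>4p/|D_j|>\sqrt{3p}$, and every $|D_i|$ lies in $(\sqrt{3p},T)$ with $T:=\frac{4}{\sqrt{3}}\sqrt{p}$. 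Theorem~8 now supplies integers $x_{ij}$ with $|D_i||D_j|-x_{ij}^2=4p$ for all $1\le i<j\le 3$.

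Write $j_0=j(E)$ for a supersingular elliptic curve $E/\mathbb{F}_p$, so $\mathcal{O}:=\mathrm{End}(E)$ is a maximal order in $B_{p,\infty}$ containing the Frobenius $\pi$ with $\pi^2=-p$. For each embedding $O_{D_i}\hookrightarrow\mathcal{O}$ pick $\alpha_i\in\mathcal{O}$ of reduced trace $t_i$ and set $\alpha_i':=\alpha_i-t_i/2$, a trace-zero element with $\mathrm{Nrd}(\alpha_i')=|D_i|/4$. Since $|D_i|<p$, the embedded field $\mathbb{Q}(\alpha_i)$ differs from $\mathbb{Q}(\pi)$; combined with $p$ inert in $\mathbb{Q}(\sqrt{D_i})$, conjugation by $\pi$ acts on the embedded $O_{D_i}$ as its non-trivial Galois automorphism, whence $\pi\alpha_i\pi^{-1}=\bar\alpha_i$ and $\pi\alpha_i'=-\alpha_i'\pi$. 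Consequently $\mathrm{tr}(\pi\alpha_i')=0$, i.e.\ each $\alpha_i'$ is orthogonal to $\pi$ under the pairing $(x,y)\mapsto-\mathrm{tr}(xy)/2$ on the trace-zero part of $B_{p,\infty}$. That trace-zero part is three-dimensional over $\mathbb{Q}$, so the orthogonal complement of the non-zero vector $\pi$ is two-dimensional, forcing $\alpha_1',\alpha_2',\alpha_3'$ to be linearly dependent.

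Hence the Gram matrix $G:=(-\mathrm{tr}(\alpha_i'\alpha_j')/2)_{i,j}$ has determinant zero. To match its off-diagonal entries with the $x_{ij}$, decompose $\alpha_i'\alpha_j'=s_{ij}+u_{ij}$ into scalar plus trace-zero parts; $u_{ij}=[\alpha_i',\alpha_j']/2$ is orthogonal to both $\alpha_i'$ and $\alpha_j'$, so $u_{ij}\in\mathbb{Q}\pi$. Writing $u_{ij}=c_{ij}\pi$, multiplicativity of $\mathrm{Nrd}$ gives $16s_{ij}^2+16pc_{ij}^2=|D_i||D_j|$. Because $2u_{ij}=[\alpha_i,\alpha_j]\in\mathcal{O}$ and $\mathcal{O}\cap\mathbb{Q}\pi=\mathbb{Z}\pi$, $c_{ij}\in\tfrac12\mathbb{Z}$; $c_{ij}=0$ would force $\alpha_i',\alpha_j'$ to commute and share a quadratic subfield (ruled out above), while $|c_{ij}|\ge 1$ would force $|D_i||D_j|\ge 16p>16p/3$, impossible. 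Thus $c_{ij}=\pm\tfrac12$, $(4s_{ij})^2=|D_i||D_j|-4p=x_{ij}^2$, and $4G$ has diagonal $|D_i|$ and off-diagonal entries $\pm x_{ij}$.

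Expanding $\det(4G)=0$ with $x_{ij}^2=|D_i||D_j|-4p$ yields $|D_1||D_2||D_3|-2p(|D_1|+|D_2|+|D_3|)=\pm x_{12}x_{13}x_{23}$; squaring and comparing with $\prod_{i<j}(|D_i||D_j|-4p)$ collapses to
$$2(|D_1||D_2|+|D_1||D_3|+|D_2||D_3|) - (|D_1|^2+|D_2|^2+|D_3|^2) = 16p.$$
Setting $E(a,b,c):=2(ab+ac+bc)-(a^2+b^2+c^2)$, the partial derivative $\partial E/\partial a=2(b+c-a)$ is strictly positive on $(\sqrt{3p},T)^3$ since $b+c>2\sqrt{3p}>T>a$, so $E$ is strictly increasing in each coordinate there with supremum $E(T,T,T)=3T^2=16p$; since $|D_i|<T$ strictly, $E(|D_1|,|D_2|,|D_3|)<16p$, contradicting the displayed identity. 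The principal technical step is the third paragraph, pinning down $c_{ij}=\pm\tfrac12$ so that the abstract Gram entries line up with the concrete integers $x_{ij}$ of Theorem~8.
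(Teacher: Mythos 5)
The decisive step in your argument is the claim that each $\alpha_i'$ anticommutes with $\pi$: this is what places all three trace-zero elements in the two-dimensional space $\pi^{\perp}$ and makes the Gram matrix singular, and everything after it depends on it. Your justification --- that $\mathbb{Q}(\alpha_i)\neq\mathbb{Q}(\pi)$ together with $p$ inert in $\mathbb{Q}(\sqrt{D_i})$ forces conjugation by $\pi$ to act on the embedded $O_{D_i}$ as its nontrivial automorphism --- is not a proof. Conjugation by $\pi$ sends a pure quaternion $\alpha'$ to $\pm\alpha'$ exactly when $\alpha'\in\mathbb{Q}\pi$ or $\alpha'\perp\pi$, so "conjugation acts as the nontrivial automorphism" is literally equivalent to the orthogonality you are trying to prove, and the two facts you cite do not imply it: for $p\equiv 3\pmod 4$, in the maximal order $\mathbb{Z}+\mathbb{Z}i+\mathbb{Z}\frac{1+j}{2}+\mathbb{Z}\frac{i+ij}{2}$ of $(-1,-p)$ with $\pi=j$, the integral trace-zero element $i-j$ generates a field different from $\mathbb{Q}(\pi)$ in which $p$ is inert, yet $j(i-j)j^{-1}=-i-j$ does not lie in $\mathbb{Q}+\mathbb{Q}(i-j)$. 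What actually forces orthogonality is the size bound $\mathrm{Nrd}(\alpha_i')=|D_i|/4<p/4$ combined with an integrality statement for the $\pi$-component (you never use $|D_i|<p$ this way, only to distinguish the fields): for instance, $\alpha_i\pi$ lies in the unique two-sided ideal of $\mathcal{O}$ above the ramified prime $p$, so $p\mid\mathrm{Trd}(\alpha_i\pi)$, whence the $\pi$-component of $\alpha_i'$ lies in $\frac12\mathbb{Z}\pi$ and would contribute at least $p/4$ to the reduced norm; alternatively one can argue via Deuring lifting and Galois-equivariance of reduction, but then one must check that the Frobenius element fixes $\mathbb{Q}(j(O_{D_i}))$, which uses $j_0\in\mathbb{F}_p$ and is absent from your write-up. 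This is precisely the point the paper handles concretely: writing $\gamma_i$ in Ibukiyama's order, the coefficient of $\alpha$ is $y_i/2$ with $y_i\in\mathbb{Z}$, and $|D_i|<p$ forces $y_i=0$.

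Once that anticommutation is supplied, the rest of your proof is correct and ends differently from the paper's: you extract the identity $2\sum_{i<j}|D_i||D_j|-\sum_i|D_i|^2=16p$ from the vanishing Gram determinant (with $c_{ij}=\pm\frac12$ pinned down correctly) and refute it by an elementary monotonicity bound on $(\sqrt{3p},\frac{4}{\sqrt3}\sqrt p)^3$, whereas the paper identifies the rank-two lattice orthogonal to $\pi$ with a reduced binary quadratic form of discriminant $-16p$ and bounds its values off the coordinate axes; you also justify the reduction to pairwise distinct fields $\mathbb{Q}(\sqrt{D_i})$, which the paper merely assumes. So the gap is isolated, but it sits at the one step that makes the whole construction two-dimensional, and as written the stated implication is false.
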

\begin{proof}
We assume that $\mathbb{Q}(\sqrt{D_1})$, $\mathbb{Q}(\sqrt{D_2})$ and $\mathbb{Q}(\sqrt{D_3})$ are not equal to each other.
Let $|D_2|$ be the smallest value such that $H_{D_1}(X)$ mod $p$ and $H_{D_2}(X)$ mod $p$ have a common root $j$ in $\mathbb{F}_p$. We must have $|D_1|>\sqrt{3p}$ since $D_1D_2>4p$. As we know, $f(X,Y)=-D_1X^2-2xXY-D_2Y^2$ is a reduced positive definite quadratic form with discriminant $-16p$. On the other hand, we have that $O_i=\mathbb{Z}[\frac{-D_i+\sqrt{D_i}}{2}]$ can be embedded into the endomorphism ring $\mathcal{O}$ of $E(j)$ for $i=1,2$. We assume
$$\mathcal{O}=\mathcal{O}(q,r)= \mathbb{Z} + \mathbb{Z}\frac{1+\beta}{2} + \mathbb{Z} \frac{\alpha(1+\beta)}{2} + \mathbb{Z}\frac{(r+\alpha)\beta}{q},$$
where $q \equiv 3 \ \text{mod} \ 8$, $\left ( \frac{p}{q} \right )=-1$, $\alpha^2=-p$, $\beta^2=-q$ and $\alpha\beta=-\beta\alpha$. If $\mathcal{O}=\mathcal{O}'(q,r')$, the proof is similar.\par
There exist $\gamma_i \in \mathcal{O}$ such that $\text{Trd}(\gamma_i)=-D_i$ and $\text{Nrd}(\gamma_i)=\frac{D_i^2-D_i}{4}$ for $i=1,2$. Let $\gamma_i= w_i + x_i \frac{1+\beta}{2} + y_i \frac{\alpha(1+\beta)}{2} + z_i \frac{(r+\alpha)\beta}{q}$, then we have the following Diophantine equations:
$$ 2w_i + x_i =-D_i$$ and $$(w_i + \frac{x_i}{2})^2 + \frac{p}{4}y_i^2 + q(\frac{x_i}{2}+\frac{z_ir}{q})^2 + pq(\frac{y_i}{2}+ \frac{z_i}{q})^2 = \frac{D_i^2-D_i}{4}.$$
We must have $y_i=0$ since $|D_i| < p$ and $$qx_i^2 + 4rx_iz_i + \frac{4r^2+4p}{q}z_i^2=-D_i.$$
Let $g(X,Y)=qX^2+4rXY+\frac{4r^2+4p}{q}Y^2$. We know that $g(x,y) \equiv 0$ or $3 \pmod 4$ for any $x,y \in \mathbb{Z}$.\par
We assume that the first (resp. second) successive minimum of $g(X,Y)$ is $\sqrt{-D_1}$ (resp. $\sqrt{-D_2}$) which coincides with $f(X,Y)$. By Proposition 5.7.3 and Theorem 5.7.6 of \cite{MR2300780}, $g(X,Y)$ can be reduced to $f(X,Y)$ or $f(X,-Y)$. It is easy to see that the minimal value of the function $f(X,Y)$ is $f(0,0)=0$. If we further assume $X\neq 0$ and $Y\neq 0$, then $f(X,Y) \ge f(1,1)$. Moreover, we have
$$\begin{aligned}
f(1,1)= -D_1-2x-D_2>&2\sqrt{D_1D_2}-2x=2\sqrt{4p+x^2}-2x \\
=&\frac{8p}{\sqrt{4p+x^2}+x} \\
>&\frac{8p}{\sqrt{4p+\frac{4p}{3}}+\frac{2\sqrt{p}}{\sqrt{3}}}\\
=&\frac{4}{\sqrt{3}}\sqrt{p},
\end{aligned}$$
where the second inequality holds since $0<x<\frac{2}{\sqrt{3}}\sqrt{p}$. Suppose that there exists an imaginary quadratic order $O_3$ which can be embedded in $\mathcal{O}(q,r)$, then the discriminant $D_3$ satisfies $|D_3|>\frac{4}{\sqrt{3}}\sqrt{p}$.\par
If $\sqrt{-D_1}$ (resp. $\sqrt{-D_2}$) is not the first (resp. second) successive minimum of $g(X,Y)$, then $|D_2|>\frac{4}{\sqrt{3}}\sqrt{p}$ and this theorem holds. \par
\end{proof}

\section{Computing the class number of $\mathbb{Q}(\sqrt{-p})$}
It is time to present our algorithm (Algorithm 2) to compute the class number of $\mathbb{Q}(\sqrt{-p})$.
\begin{algorithm}[H]
\caption{ Computing the class number of $\mathbb{Q}(\sqrt{-p})$.}
\label{alg:Framwork}
\begin{algorithmic}[1]
\Require
A prime $p>5$;
\Ensure
The class number of $\mathbb{Q}(\sqrt{-p})$;
\State Let $s_p=0$ and $T=\varnothing$;
\label{code:fram:let}
\State For every negative discriminant $D\equiv 0$ or $1 \pmod4$ satisfying $|D| < \frac{4}{\sqrt{3}}\sqrt{p}$, if the Legendre symbol $\left (\frac{D}{p} \right )=-1$, do the following two steps;
\label{code:fram:for}
\State Factor $D$ and compute the generators of the field $E$;
\label{code:fram:if}
\State If $p$ splits completely in $E$, then $s_p=s_p+2^{\mu-1}$; Furthermore, if $|D|>\sqrt{3p}$, then add $D$ to the set $T$;
\label{code:fram:split}
\State Calculate the number $t$ of pairings $(D_1,D_2)$ with $D_1,D_2 \in T$ where $D_1D_2-4p$ is a square;
\State $ h= \left\{
\begin{array}{ll}
\frac{1}{2} (s_{p}-t) &{\text{if} \ p\equiv 3 \pmod{8} ,} \\
s_{p}-t &{\text{if} \ p\equiv 7 \pmod{8} ,}\\
2(s_{p}-t) &{\text{if} \ p\equiv 1 \pmod{4}; }
\end{array}
\right.$
\label{code:fram:equation} \\
\Return $h$.
\end{algorithmic}
\end{algorithm}
The correctness of Algorithm 2 is guaranteed by Theorem 5, 8 and 9.
We now analyze the complexity of Algorithm 2. There are about $\frac{2}{\sqrt{3}}\sqrt{p}$ many $D$ in step 2. We just need to compute less than $\log p$ many Legendre symbols in step 4 which can be calculated in polynomial time of $\log p$.\par
Factoring large integers is difficult. Hittmeir \cite{MR3834692} presented a deterministic algorithm that provably computes the prime factorisation of a positive integer $N$ in $N^{2/9+\epsilon}$ bit operations. Recently, the exponent for deterministic factorization was improved to $1/5$ by Harvey in \cite{harvey2020exponent}.
What's more, Coppersmith algorithm uses lattice basis reduction techniques, and it has the advantage of requiring little memory space. There are several probabilistic algorithms for factoring integers. The quadratic sieve\cite{MR825590}, the number field sieve\cite{MR1321216} and the elliptic curve method \cite{MR916721} are subexponential algorithms for factoring integers.\par
In step 5, it is easy to determine whether $D_1D_2-4p$ is a square or not. There are at most ($\frac{\sqrt{3p}}{3} \times \frac{\sqrt{3p}}{3}$) many pairings $(D_1,D_2)$ in $T$. The time complexity of step 5 is $O(p)$, and the constant is small by further analysis. In general, the time complexity of Algorithm 2 is $O(p)$.\par
As we can see, the step 5 occupies the most time in Algorithm 2. If $D_1D_2-4p$ is a square, then the equation $x^2 \equiv -4p \pmod {|D_2|}$ is solvable. Moreover, if the solution $x$ satisfies $\sqrt{3p} < \frac{x^2+4p}{|D_2|}  < |D_2|$ and $\frac{x^2+4p}{D_2}$ is a negative discriminant, then we get a pairing $(D_1=\frac{x^2+4p}{|D_2|},D_2)$ such that $D_1D_2-4p$ is a square. Moreover, we have Algorithm 3.\par
The correctness of Algorithm 3 is obvious. We will analyze the time complexity of Algorithm 3.\par
We will discuss the time complexity of step 5. Assume $|D|=p_1^{a_1}\cdots p_t^{a_t}$ with $a_i>0$, we have the following equations:
$$\left \{
\begin{array}{c}
x^2 \equiv -4p \pmod {p_1^{a_1}}; \\
\cdots\\
x^2 \equiv -4p \pmod {p_t^{a_t} }.
\end{array}
\right.$$\par
If $p_i>2$, the solutions of $x^2 \equiv -4p \pmod {p_i^{a_i}}$ can be derived from the solutions of $x^2 \equiv -4p \pmod {p_1}$ by Theorem 9.3 in \cite[Chapter 2]{MR665428}. Moreover, one can solve the equation $x^2 \equiv -4p \pmod {p_1}$ in polynomial time if we assume the Generalized Riemann Hypothesis (GRH). If $p_i=2$, then $a_i \ge 2$ and we can solve the equation $x^2 \equiv -p \pmod {2^{a_i-2}}$ by Theorem 5.1 in \cite[Chapter 3]{MR665428}. By this way, it is easy to get the solutions of $x^2 \equiv -4p \pmod {2^{a_i}}$, and we omit the details. If we do not assume the GRH, Bourgain et al.\cite{MR3378857} researched the polynomial factorization problem when the polynomial $f$ fully splits over $\mathbb{F}_p$. They proved the following theorem.
\begin{theorem}
  There is a deterministic algorithm that, given a squarefree polynomial $f \in \mathbb{F}_p[X]$ of degree $n$ that fully splits over $\mathbb{F}_p$, finds in time $(n+p^{1/2})p^{\epsilon}$ a root of $f$.
\end{theorem}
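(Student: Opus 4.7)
The plan is to describe a Berlekamp--Rabin style deterministic root-finding algorithm in which the random shift parameter is replaced by a short deterministic interval whose effectiveness is guaranteed by a character-sum bound, combined with fast polynomial arithmetic.

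Since $f$ fully splits over $\mathbb{F}_p$, its roots form a set $R=\{\alpha_1,\dots,\alpha_n\}\subseteq\mathbb{F}_p$. For any $a\in\mathbb{F}_p$ one considers
$$h_a(X)\;:=\;\gcd\bigl(f(X),\,(X+a)^{(p-1)/2}-1\bmod f(X)\bigr),$$
whose roots are precisely those $\alpha_i$ with $\alpha_i+a$ a nonzero quadratic residue. A single $h_a$ can be computed by one modular exponentiation, via repeated squaring of $X+a$ modulo $f$, followed by one polynomial gcd; with Sch\"onhage--Strassen multiplication this costs $\tilde O(n\log p)$.

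The algorithm maintains a current factor $g$ of $f$, initially $g=f$, and iterates: loop over $a=0,1,\dots,\lfloor p^{1/2+\varepsilon}\rfloor$, compute $h_a$ modulo $g$, and at the first non-trivial split $1\le\deg h_a<\deg g$ replace $g$ by the smaller of $h_a$ and $g/h_a$; terminate and return the root when $\deg g=1$. Correctness reduces by induction on $\deg g\ge 2$ to the following discrimination claim: for any two distinct $\beta_1,\beta_2\in\mathbb{F}_p$ there exists $a\in[0,\lfloor p^{1/2+\varepsilon}\rfloor]$ with $\chi(\beta_1+a)\neq\chi(\beta_2+a)$, where $\chi$ is the Legendre symbol.

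The main obstacle is exactly this discrimination claim: proving that a \emph{deterministic} interval of length $p^{1/2+\varepsilon}$ suffices to separate any pair of residues via the quadratic character. This is the substantive analytic ingredient, obtained through Bourgain--Konyagin--Shparlinski type estimates on short double character sums that improve on what the classical Burgess bound alone would yield. Granting this, the running time is controlled by combining the $\tilde O(\log n)$ bound on the number of outer iterations (the degree of $g$ at least halves each time) with the $\tilde O(n\log p)$ cost per gcd; a careful amortisation that only recurses on the smaller factor, uses a baby-step/giant-step batching of the modular exponentiations $(X+a)^{(p-1)/2}\bmod g$ across the $p^{1/2+\varepsilon}$ shifts of the top level, and absorbs polylogarithmic factors into $p^\varepsilon$, then produces the claimed $(n+p^{1/2})p^\varepsilon$ total.
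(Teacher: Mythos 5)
First, note that the paper does not prove this statement at all: Theorem 10 is imported verbatim from the cited work of Bourgain, Konyagin and Shparlinski, so there is no internal proof to compare against, and your task is really to reconstruct their argument. Your sketch reproduces the right general framework (splitting $f$ via $\gcd\bigl(f,(X+a)^{(p-1)/2}-1\bigr)$ and recursing on the smaller factor), but it has two substantive problems. The first is a misdiagnosis: the ``discrimination claim'' for a plain interval of length $p^{1/2+\varepsilon}$ is \emph{not} the hard analytic ingredient. By the Weil bound together with completion \`a la P\'olya--Vinogradov one gets $\sum_{a\in I}\chi\bigl((a+\beta_1)(a+\beta_2)\bigr)=O(p^{1/2}\log p)$, which is already $o(|I|)$ once $|I|\ge p^{1/2+\varepsilon}$; no new double character sum estimate is needed for that. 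The Bourgain--Konyagin--Shparlinski bound is needed for a different reason, namely to make the separation property hold for a \emph{structured} set of shifts (essentially products of two short intervals) rather than an interval, and that structure is exactly what the algorithmic amortisation exploits.

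The second, and fatal, problem is the running time. The loop you describe performs, in the worst case, up to $p^{1/2+\varepsilon}$ modular exponentiations $(X+a)^{(p-1)/2}\bmod g$, each costing $\tilde O(n\log p)$, for a total of $n^{1+o(1)}p^{1/2+\varepsilon}$ --- a multiplicative bound, not the additive $(n+p^{1/2})p^{\varepsilon}$ claimed by the theorem. You acknowledge this and appeal to a ``baby-step/giant-step batching of the modular exponentiations across the $p^{1/2+\varepsilon}$ shifts,'' but you give no mechanism, and there is no naive way to batch that many independent exponentiations over an unstructured interval of shifts: this amortisation is precisely the nontrivial algorithmic content of the cited result (it proceeds via resultants and product trees over a multiplicatively structured shift set, which is also why the character-sum input must be a double sum over that set). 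As written, the proposal establishes at best the weaker $n^{1+o(1)}p^{1/2+\varepsilon}$ bound and defers both genuinely new ingredients of the theorem --- the character sum estimate for structured shift sets and the additive-cost batch evaluation --- to unproved assertions.
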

Next, we can get $x \pmod{|D|}$ by Chinese Remainder Theorem. Generally, there are about $2^t$ many solutions where $t$ is the number of distinct prime factors of $|D|$. As we know (see \cite[Chapter 5.10]{MR665428}), $t\sim \log\log(|D|)$ for almost all $|D|$, so there are $O(\log |D|)$ many solutions of $x^2 \equiv -4p \pmod{|D|}$ for almost all $|D|$. As we discussed above, step 5 can be done in polynomial time, and factoring $D$ in step 3 occupies the most time in Algorithm 3.
\begin{algorithm}[H]
\caption{ Computing the class number of $\mathbb{Q}(\sqrt{-p})$.}
\label{alg:Framwork}
\begin{algorithmic}[1]
\Require
A prime $p$;
\Ensure
The class number of $\mathbb{Q}(\sqrt{-p})$;
\State Let $s_p=0$ and $t=0$;
\label{code:fram:let}
\State For every negative discriminant $D\equiv 0$ or $1 \pmod4$ satisfying $|D| < \frac{4}{\sqrt{3}}\sqrt{p}$, if the Legendre symbol $\left (\frac{D}{p} \right )=-1$, do the following three steps;
\label{code:fram:for}
\State Factor $D$ and compute the generators of the field $E$;
\label{code:fram:if}
\State If $p$ splits completely in $E$, then $s_p=s_p+2^{\mu-1}$; Furthermore, if $|D|>2\sqrt{p}$, then do the next step;
\label{code:fram:split}
\State Solve the equation $x^2 \equiv -4p \pmod {|D|}$ and calculate the number $t_D$ of solutions satisfying $\sqrt{3p} < \frac{x^2+4p}{|D|}  < |D|$ and $\frac{x^2+4p}{D} \equiv 0,1 \pmod 4$. Let $t=t+t_D$;
\State $ h= \left\{
\begin{array}{ll}
\frac{1}{2} (s_{p}-t) &{\text{if} \ p\equiv 3 \pmod{8} ,} \\
s_{p}-t &{\text{if} \ p\equiv 7 \pmod{8} ,}\\
2(s_{p}-t) &{\text{if} \ p\equiv 1 \pmod{4}; }
\end{array}
\right.$
\label{code:fram:equation} \\
\Return $h$.
\end{algorithmic}
\end{algorithm}

\begin{theorem}
The time complexity of Algorithm 3 is $O(p^{3/4+\epsilon})$. Moreover, if we factor $D$ by probabilistic algorithms and assume the Generalized Riemann Hypothesis (GRH), the time complexity can be reduced to $\tilde{O}(p^{1/2})$.
\end{theorem}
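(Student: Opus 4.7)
The plan is to give a per-step cost analysis of Algorithm~3 and sum the contributions over the outer loop. The loop in Step~2 iterates over negative discriminants $D$ with $|D| < \frac{4}{\sqrt{3}}\sqrt{p}$, and hence is executed $O(\sqrt{p})$ times. For each such $D$ I would separate out three pieces of work: the Legendre symbol check, the factorisation of $D$ together with the splitting test in Steps~3--4, and the quadratic congruence solve in Step~5 (which is only triggered when $|D|>2\sqrt{p}$). The first two pieces are essentially polynomial in $\log p$ once the factorisation is available, so the crux is bounding Step~5.

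For the unconditional $O(p^{3/4+\epsilon})$ bound, I would invoke Harvey's deterministic factorisation algorithm \cite{harvey2020exponent} to factor $|D|$ in $|D|^{1/5+\epsilon} = O(p^{1/10+\epsilon})$ bit operations. Having the prime decomposition of $|D|$ makes the generators $\sqrt{p_i^{*}}$ of the genus subfield $E$ explicit and reduces the splitting test to $O(\log p)$ Legendre symbol evaluations. In Step~5, for each odd prime divisor $p_i$ of $|D|$ the congruence $x^2\equiv -4p\pmod{p_i}$ is solved in time $(1+p_i^{1/2})p_i^{\epsilon}$ by the Bourgain et al.\ theorem cited above \cite{MR3378857}; since $p_i\le |D|\le O(\sqrt{p})$, each such solve costs $O(p^{1/4+\epsilon})$. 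Hensel lifting to the prime powers $p_i^{a_i}$ and handling $p_i=2$ contribute only $(\log p)^{O(1)}$, and CRT assembles the local roots into at most $2^t = |D|^{o(1)} = p^{o(1)}$ global candidates modulo $|D|$, each of which is tested against the window $\sqrt{3p} < \frac{x^2+4p}{|D|} < |D|$ in polynomial time. Summing the dominant $O(p^{1/4+\epsilon})$ per-iteration cost over the $O(\sqrt{p})$ outer iterations then yields the desired $O(p^{3/4+\epsilon})$ total.

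For the conditional $\tilde{O}(p^{1/2})$ bound the two bottlenecks shrink simultaneously: under GRH, the equation $x^2\equiv -4p\pmod{p_i}$ becomes solvable in $(\log p)^{O(1)}$, and a probabilistic subexponential factoriser such as ECM factors $|D|$ in expected time $L(|D|)^{O(1)} = p^{o(1)}$. Each iteration then costs $p^{o(1)}$, and multiplying by the $O(\sqrt{p})$ outer iterations gives expected total $p^{1/2+o(1)} = \tilde{O}(p^{1/2})$.

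The main subtlety I expect to watch is the $2^t$ blow-up in the CRT step: a single $|D|$ can have up to $O(\log|D|/\log\log|D|)$ prime factors, so $2^t$ is only $|D|^{o(1)}$ rather than polylogarithmic. The bound still goes through because the factor $|D|^{o(1)}\cdot p^{1/4+\epsilon}$ is absorbed into $p^{1/4+\epsilon}$, so the heuristic $t\sim\log\log|D|$ quoted from \cite[Chapter~5.10]{MR665428} is reassuring but is not strictly needed for the worst-case bound. The only other place deserving care is the verification that Steps~3 and~5 reuse the same factorisation of $|D|$, so that factoring is charged only once per iteration.
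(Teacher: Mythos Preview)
Your proposal is correct and follows essentially the same approach as the paper: per-iteration cost dominated by the square-root extraction modulo the prime factors of $|D|$ via Theorem~10 (Bourgain et al.) giving $O(p^{1/4+\epsilon})$ unconditionally, summed over $O(\sqrt{p})$ discriminants, with the conditional bound coming from GRH-based square-root extraction and subexponential factoring. Your treatment of the $2^t$ CRT blow-up is in fact more careful than the paper's, which only quotes the average-order estimate $t\sim\log\log|D|$, whereas you correctly observe that the worst-case bound $2^t = |D|^{o(1)}$ already suffices.
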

\begin{proof}
  As discussed above, for every $D$ with $\left ( \frac{D}{p} \right )=-1$, we need factor $|D|$ in step 3. Moreover, for $|D|>2\sqrt{p}$, we also need solve the equation $x^2 \equiv -4p \pmod {|D|}$, which can be done in polynomial time under the Generalized Riemann Hypothesis (GRH)or in time $O(p_i^{1/2+\epsilon})(O(p^{1/4+\epsilon}))$ by Theorem 10. Other steps can be computed in polynomial time. There are about $\frac{2}{\sqrt{3}}\sqrt{p}$ many $D$, so this theorem holds.
\end{proof}
We present in the following table the results of applying Algorithm 3 to compute class numbers of $\mathbb{Q}(\sqrt{-p})$ with various size discriminants. The computations were all carried out on a PC microcomputer with Intel(R) Core(TM) i5-10210U by PARI/GP. Here $h$ is the class number of the quadratic field $\mathbb{Q}(\sqrt{-p})$, and $T$ is the time required to compute the class number of $\mathbb{Q}(\sqrt{-p})$ by Algorithm 3. All these $T$ are expressed in second.
\begin{table}[H]
  \centering
  \setlength{\tabcolsep}{5mm}{
  \begin{tabular}{l r r }
    \toprule
    $p$ & $h$ &  $T$ \\
    \midrule
    $10^{11}+283$ & $88847$ &  $14$ \\
    $10^{12}+547$ & $240171$ &  $50$ \\
    $10^{13}+99$ & $670135$ &  $182$ \\
    $10^{14}+99$ & $1981515$ &  $694$ \\
    $10^{15}+9867$ & $4921593$ &  $2605$\\
    \bottomrule
  \end{tabular}}
\end{table}
The memory space of Algorithm 3 can be neglected. For similar-sized $p$'s, the running time changes as the class number changes. We use the deterministic factorization algorithms in the program. The data in the table illustrate the correctness and effectiveness of Algorithm 3.

\section{Conclusion}
For a prime $p$, let $D_1$, $D_2$ be any two negative discriminants. Assume that $D_1$ and $D_2$ are not quadratic residues in $\mathbb{F}_p$ with $|D_1|< |D_2| < \frac{4\sqrt{p}}{3}$, then $H_{D_1}(X)$ and $H_{D_2}(X)$ have a common root in $\mathbb{F}_p$ if $D_1D_2-4p$ is a square. The case which $H_{D_1}(X)$ and $H_{D_2}(X)$ have a common root in $\mathbb{F}_{p^2} \setminus \mathbb{F}_p$ is also interesting, and it may be more difficult.\par
The correctness of our algorithms, Algorithm 2 and Algorithm 3, are guaranteed by Theorem 5, Theorem 8 and Theorem 9. In fact, we use neither the supersingular $j$-invariants nor the class polynomials in the calculation process.

\section{Acknowledge}
The authors would like to thank Shparlinski for suggestions on the complexity of Algorithm 3 and Jianing Li for comments on the proof of Theorem 5.

\bibliographystyle{amsplain}
\bibliography{reference}

\end{document}